\newtheorem{theorem}{Theorem}[section] 
\newtheorem{proposition}[theorem]{Proposition} 
\newtheorem{lemma}[theorem]{Lemma} 
\theoremstyle{definition}
\newtheorem{examples}[theorem]{Examples}
\newtheorem{remark}[theorem]{Remark} 
\newtheorem{remarks}[theorem]{Remarks}
\newcommand{\CC}{{\mathbb C}} 
\newcommand{\NN}{{\mathbb N}}
\newcommand{\RR}{{\mathbb R}} 
\newcommand{\cA}{{\mathcal A}} 
\newcommand{\cB}{{\mathcal B}}
\newcommand{\cE}{{\mathcal E}} 
\newcommand{\cF}{{\mathcal F}} 
\newcommand{\cG}{{\mathcal G}} 
\newcommand{\cH}{{\mathcal H}} 
\newcommand{\cI}{{\mathcal I}}
\newcommand{\cJ}{{\mathcal J}} 
\newcommand{\cK}{{\mathcal K}} 
\newcommand{\cL}{{\mathcal L}} 
\newcommand{\cN}{{\mathcal N}} 
\newcommand{\cR}{{\mathcal R}} 
\newcommand{\cS}{{\mathcal S}}
\newcommand{\cV}{{\mathcal V}} 
\newcommand{\cW}{{\mathcal W}} 
\newcommand{\cX}{{\mathcal X}}
\newcommand{\cY}{{\mathcal Y}}
\newcommand{\fk}{\mathbf{k}}
\newcommand{\Ra}{\Rightarrow}
\newcommand{\ra}{\rightarrow} 
\newcommand{\ol}{\overline}
\let\phi=\varphi
\newcommand{\lin}{\operatorname{Lin}}
    \newcommand{\Lloc}{\cB_{\mathrm{loc}}}
\newcommand{\nr}[1]{\vspace{0.1ex}\noindent\hspace*{12mm}\llap{\textup{(#1)}}} 
\begin{document} 
\title[Operator Models for Hilbert Locally $C^*$-Modules]{Operator Models for Hilbert Locally $C^*$-Modules}\thanks{Work supported by a grant of the Romanian 
National Authority for Scientific Research, CNCS  UEFISCDI, project number
PN-II-ID-PCE-2011-3-0119.}
 
 \date{\today}
  
\author[A. Gheondea]{Aurelian Gheondea} 
\address{Department of Mathematics, Bilkent University, 06800 Bilkent, Ankara, 
Turkey, \emph{and} Institutul de Matematic\u a al Academiei Rom\^ane, C.P.\ 
1-764, 014700 Bucure\c sti, Rom\^ania} 
\email{aurelian@fen.bilkent.edu.tr \textrm{and} A.Gheondea@imar.ro} 

\begin{abstract} We single out the concept of 
concrete Hilbert module over a locally $C^*$-algebra 
by means of locally bounded operators on certain strictly 
inductive limits of Hilbert spaces. Using this concept, we construct an operator
model for
all Hilbert locally $C^*$-modules and, as an application, we obtain a direct construction of the 
exterior tensor product of Hilbert locally $C^*$-modules. These are obtained as consequences 
of a general dilation theorem for positive semidefinite kernels invariant under an action of a 
$*$-semigroup with values locally bounded operators. As a by-product, we obtain two 
Stinespring type theorems for completely positive maps on locally $C^*$-algebras and 
with values locally bounded operators.
\end{abstract} 

\subjclass[2010]{Primary 47A20; Secondary 46L89, 46E22, 43A35}
\keywords{Locally Hilbert space, inductive limit, projective limit, locally $C^*$-algebra, 
Hilbert locally $C^*$-module, positive semidefinite kernel, $*$-semigroup, invariant kernel,  
completely positive map, reproducing kernel.}
\maketitle 

\section*{Introduction}

The origins of Hilbert modules over locally $C^*$-algebras (shortly, Hilbert locally $C^*$-modules) 
are related to investigations on 
noncommutative analogues of classical topological objects (groups, Lie groups, vector bundles,
index of elliptic operators, etc.) as seen in W.B.~Arveson \cite{Arveson0},
A.~Mallios \cite{Mallios}, D.V.~Voiculescu \cite{Voiculescu},
N.C.~Phillips \cite{Phillips}, 
to name a few. An overview of the theory of Hilbert locally $C^*$-modules can be found in the monograph of M.~Joi\c ta \cite{Joita5}.

This article grew out from the question of understanding Hilbert locally 
$C^*$-modules from the point of view of operator theory, more precisely, dilation of operator
valued kernels. For the case of Hilbert $C^*$-modules, such a point of view was
employed by G.J.~Murphy in \cite{Murphy} and we have been influenced to a large extent by the 
ideas in that article. However, locally $C^*$-algebras and Hilbert modules over locally 
$C^*$-algebras have 
quite involved projective limit structures and our task requires rather different tools and methods. 
The main object to be used in this enterprise is that of a locally bounded operator which, roughly 
speaking, is an adjointable and coherent element of a projective limit of Banach spaces of bounded 
operators between strictly inductive limits of Hilbert spaces (locally Hilbert spaces). 

Briefly, in Example~\ref{ex:rhlm} we single out the concept of 
represented (concrete) Hilbert locally $C^*$-module by locally bounded operators, 
then prove in Theorem~\ref{t:rephlcm}
that this concept makes the operator model for
all Hilbert locally $C^*$-modules and, as an application, we obtain in Theorem~\ref{t:etp}
a direct construction of the 
exterior tensor product of Hilbert locally $C^*$-modules. 
These are obtained as consequences of a general dilation theorem for positive semidefinite 
kernels with values locally bounded operators, presented in both linearisation 
(Kolmogorov decomposition) form and reproducing kernel space form. 
We actually prove in Theorem~\ref{t:dilation}, the main result of this 
article, a rather general dilation theorem for positive semidefinite kernels with values locally 
bounded operators and that are invariant under a left action of a $*$-semigroup. Consequently, 
in addition to the application to Hilbert locally $C^*$-modules explained before, we briefly 
discuss two versions of
Stinespring type dilation theorems for completely positive maps on locally $C^*$-algebras and with
values locally bounded operators.

In the following we describe the contents of this article. In the preliminary section 
we start by reviewing projective limits and inductive
limits of locally convex spaces that make the fabric of this article, point out the similarities as well as 
the main differences, concerning completeness and Hausdorff separation, 
between them and discuss the concept of coherence. 
Then we recall the concept of locally Hilbert 
space and reorganise the basic properties of locally bounded operators: these concepts have been 
already introduced and studied under slightly different names by A.~Inoue \cite{Inoue}, 
M.~Joi\c ta \cite{Joita1}, D.~Ga\c spar, P.~Ga\c spar, and N.~Lupa \cite{GasparGasparLupa} 
and D.J.~Karia and Y.M.~Parma \cite{KariaParmar} but, for our 
purposes, especially those related to tensor products, some of the properties require clarification, 
for example in view of the concept of coherence. 
Finally, we briefly review the concept of locally $C^*$-algebra, their operator model and 
spatial tensor product.

The second section is devoted to positive semidefinite kernels with values locally bounded
operators, where the main issue is related to their locally Hilbert space linearisations 
(Kolmogorov decompositions) and their reproducing kernel locally Hilbert spaces. 
For the special case of kernels invariant under the action of some $*$-semigroups we prove 
the  general dilation result in Theorem~\ref{t:dilation} which provides a necessary and sufficient 
boundedness condition for the existence of invariant locally Hilbert space linearisations, 
equivalently,  existence of invariant reproducing kernel locally Hilbert spaces, in terms of an analog
of the boundedness condition of B.~Sz.-Nagy \cite{SzNagy}. The proof of this theorem is essentially
a construction of reproducing kernel space, similar to a certain extent to that 
used in \cite{Gheondea}, see also \cite{Szafraniec} and the rich bibliography cited there. 
As a by-product we also point out two Stinespring type dilation 
theorems for completely positive maps defined on locally $C^*$-algebras, distinguishing the 
coherent case from the noncoherent case, the latter
closely related to \cite{Kasparov} and \cite{Joita2}, but rather different in nature.

In the last section, we first review the necessary terminology around the concept of Hilbert module 
over a locally $C^*$-algebra, then apply Theorem~\ref{t:dilation} to obtain the 
operator model by locally bounded operators and use it to provide a rather direct proof of the 
existence of the exterior tensor product of two Hilbert modules over locally $C^*$-algebras, 
similar to \cite{Murphy}; 
following the traditional construction of the exterior tensor product of Hilbert $C^*$-modules 
as in \cite{Lance}, in \cite{Joita4} this tensor product is formed through a generalisation of 
Kasparov's Stabilisation Theorem \cite{Kasparov}.

Once an operator model becomes available, 
the concept of Hilbert locally $C^*$-module is
much better understood and we think that some of the results obtained in this article will prove their 
usefulness for other investigations in this domain.

\section{Preliminaries}\label{s:p}

In this section we review most of the concepts and results that are needed in this article, starting
with projective and inductive limits of locally convex spaces, cf.\ \cite{Grothendieck}, 
\cite{Kothe}, and \cite{Helemski}, 
then considering the concept of locally Hilbert space and the related concept of 
locally bounded operator, cf.\ \cite{Inoue}, \cite{Joita1}, 
\cite{GasparGasparLupa}. For our purposes, we are especially concerned with 
tensor products of locally Hilbert spaces.
Then we review locally $C^*$-algebras, cf.\ \cite{Inoue}, \cite{Schmudgen}, \cite{Allan}, 
\cite{Apostol}, \cite{Phillips}, and define their spatial tensor product.

\subsection{Projective Limits of Locally Convex Spaces.}\label{ss:pllcs}
A \emph{projective system} of locally 
convex spaces is a pair $(\{\cV_\alpha\}_{\alpha\in A};\{\phi_{\alpha,\beta}\}_{\alpha\leq\beta})$
subject to the following properties:
\begin{itemize}
\item[(ps1)] $(A;\leq)$ is a directed poset (partially ordered set);
\item[(ps2)] $\{\cV_\alpha\}_{\alpha\in A}$ is a net of locally convex spaces;
\item[(ps3)] $\{\phi_{\alpha,\beta}\mid \phi_{\alpha,\beta}\colon 
\cV_\beta\ra \cV_\alpha,\ \alpha,\beta\in A,\ \alpha\leq\beta\}$ is a net of continuous linear 
maps such that $\phi_{\alpha,\alpha}$ is the identity map on $\cV_\alpha$ for all $\alpha\in A$; 
\item[(ps4)] the following \emph{transitivity} condition holds
\begin{equation}\phi_{\alpha,\gamma}
=\phi_{\alpha,\beta}\circ\phi_{\beta,\gamma},\mbox{ for all } \alpha,\beta,\gamma\in A,
\mbox{ such that } \alpha\leq\beta\leq\gamma.
\end{equation}
\end{itemize}

For such a system, its projective limit is defined as follows. 
First consider the vector space 
\begin{equation}\prod_{\alpha\in A}
\cV_\alpha=\{(v_\alpha)_{\alpha\in A}\mid v_\alpha\in\cV_\alpha,\ \alpha\in A\},
\end{equation}
with product topology, that is, the weakest topology which makes the canonical 
projections $\prod_{\alpha\in A}
\cV_\alpha\ra\cV_\beta$ continuous, for all $\beta\in A$. 
Then define $\cV$ as the subspace of $\prod_{\alpha\in A}
\cV_\alpha$ consisting of all nets of vectors $v=(v_\alpha)_{\alpha\in A}$ subject to the following
\emph{transitivity} condition
\begin{equation}\label{e:comp}
\phi_{\alpha,\beta}(v_\beta)=v_\alpha,\mbox{ for all } \alpha,\beta\in A,\mbox{ such that } 
\alpha\leq\beta,
\end{equation}
for which we use the notation
\begin{equation}\label{e:vel}
v=\varprojlim_{\alpha\in A}v_\alpha.
\end{equation}

Further on, for each $\alpha\in A$, define $\phi_\alpha\colon \cV\ra \cV_\alpha$ as the linear map 
obtained by composing the canonical embedding of $\cV$ in $\prod_{\alpha\in A}
\cV_\alpha$ with the canonical projection on $\cV_\alpha$. 
Observe that $\cV$ is a closed subspace 
of $\prod_{\alpha\in A}\cV_\alpha$ and let the topology on $\cV$ be the weakest locally convex 
topology that makes the linear maps $\phi_\alpha\colon \cV\ra \cV_\alpha$ continuous, for all 
$\alpha\in A$.

The pair $(\cV;\{\phi_\alpha\}_{\alpha\in A})$
is called a \emph{projective limit of locally convex spaces} induced by the projective system
$(\{\cV_\alpha\}_{\alpha\in A};\{\phi_{\alpha,\beta}\}_{\alpha\leq\beta})$ and is denoted by
\begin{equation} \cV=\varprojlim_{\alpha\in A}\cV_\alpha.
\end{equation}

With notation as before, a locally convex space $\cW$ and a net of continuous 
linear maps $\psi_\alpha\colon \cW\ra \cV_\alpha$, $\alpha\in A$, are \emph{compatible} with
the projective
system $(\{\cV_\alpha\}_{\alpha\in A};\{\phi_{\alpha,\beta}\}_{\alpha\leq\beta})$ if
\begin{equation} \psi_\alpha=\phi_{\alpha,\beta}\circ \psi_\beta,\mbox{ for all } \alpha,\beta\in A
\mbox{ with } \alpha\leq\beta.
\end{equation}
For such a pair $(\cW;\{\psi_\alpha)\}_{\alpha\in A}$, there always exists a unique continuous linear map $\psi\colon \cW\ra \cV=\varprojlim_{\alpha\in A}\cV_\alpha$ such that 
\begin{equation}\psi_\alpha=\phi_\alpha\circ \psi,\quad \alpha\in A.
\end{equation}

Note that the projective limit $(\cV;\{\phi_\alpha\}_{\alpha\in A})$ defined before is compatible with
the projective system $(\{\cV_\alpha\}_{\alpha\in A};\{\phi_{\alpha,\beta}\}_{\alpha\leq\beta})$ 
and that, in this sense,
the projective limit $(\cV_\alpha;\{\phi_\alpha\}_{\alpha\in A})$ is uniquely
determined by the projective system 
$(\{\cV_\alpha\}_{\alpha\in A};\{\phi_{\alpha,\beta}\}_{\alpha\leq\beta})$.

The projective limit of a projective system of Hausdorff locally convex spaces  is always
Hausdorff and, if all locally convex spaces are complete, then the projective limit 
is complete.

Let $(\cV;\{\phi_\alpha\}_{\alpha\in A})$, $\cV=\varprojlim_{\alpha\in A}\cV_\alpha$, and 
$(\cW;\{\psi_\alpha\}_{\alpha\in A})$, $\cW=\varprojlim_{\alpha\in A}\cW_\alpha$, 
be two projective limits of locally convex spaces indexed by the same poset $A$. 
A linear map $f\colon \cV\ra\cW$ is called \emph{coherent} if
\begin{itemize}
\item[(cpm)] There exists $\{f_\alpha\}_{\alpha\in A}$ a net of linear maps $f_\alpha\colon\cV_\alpha\ra\cW_\alpha$, $\alpha\in A$, such that $\psi_\alpha\circ f=f_\alpha\circ \phi_\alpha$ for all
$\alpha\in A$.
\end{itemize}
In terms of the underlying projective systems 
$(\{\cV_\alpha\}_{\alpha\in A};\{\phi_{\alpha,\beta}\}_{\alpha\leq\beta})$ and 
$(\{\cW_\alpha\}_{\alpha\in A};\{\psi_{\alpha,\beta}\}_{\alpha\leq\beta})$, (cpm) is equivalent with
\begin{itemize}
\item[(cpm)$^\prime$] There exists $\{f_\alpha\}_{\alpha\in A}$ a net of linear maps 
$f_\alpha\colon\cV_\alpha\ra\cW_\alpha$, $\alpha\in A$, such that $\psi_{\alpha,\beta}\circ
f_\beta=f_\alpha\circ \phi_{\alpha,\beta}$, for all $\alpha,\beta\in A$ with $\alpha\leq\beta$.
\end{itemize}
There is a one-to-one correspondence between the class of all coherent linear maps 
$f\colon\cV\ra\cW$ and the class of all nets $\{f_\alpha\}_{\alpha\in A}$ as in (cpm) or, 
equivalently, as in (cpm)$^\prime$.
It is clear that a coherent linear map $f\colon \cV\ra\cW$ is continuous if and only if $f_\alpha$ 
is continuous for all $\alpha\in A$.

\subsection{Inductive Limits of Locally Convex Spaces.}\label{ss:illcs}
An \emph{inductive system} of locally 
convex spaces is a pair $(\{\cX_\alpha\}_{\alpha\in A};\{\chi_{\beta,\alpha}\}_{\alpha\leq\beta})$
subject to the following conditions:
\begin{itemize}
\item[(is1)] $(A;\leq)$ is a directed poset; 
\item[(is2)] $\{\cX_\alpha\}_{\alpha\in A}$ is a net of locally convex spaces;
\item[(is3)] $\{\chi_{\beta,\alpha}\colon\cX_\alpha\ra\cX_\beta\mid \alpha,\beta\in A,\ \alpha\leq\beta\}$ is a net of continuous linear maps such that $\chi_{\alpha,\alpha}$ is the identity map on $\cX_\alpha$ for all $\alpha\in A$;
\item[(is4)] the following transitivity condition holds
\begin{equation}\chi_{\delta,\alpha}=\chi_{\delta,\beta}\circ\chi_{\beta,\alpha},\mbox{ for all } \alpha,
\beta,\gamma\in A\mbox{ with }
\alpha\leq\beta\leq\delta.
\end{equation}
\end{itemize}

Recall that the \emph{locally convex direct sum} $\bigoplus_{\alpha\in A}\cX_\alpha$ is 
the algebraic direct sum, that is, the subspace of the direct product $\prod_{\alpha\in A}$ defined
by all nets $\{x_\alpha\}_{\alpha\in A}$ with finite support, endowed with the strongest locally
convex topology that makes the canonical embedding 
$\cX_\alpha\hookrightarrow\bigoplus_{\alpha\in A}\cX_\beta$ continuous, for all $\beta\in A$.
In the following, we consider $\cX_\alpha$ canonically identified with a subspace of 
$\bigoplus_{\alpha\in A}\cX_\alpha$ and then, let
the linear subspace $\cX_0$ of $\bigoplus_{\alpha\in A}\cX_\alpha$ be defined by
\begin{equation}\label{e:xzero}
\cX_0=\lin\{x_\alpha-\chi_{\beta,\alpha}(x_\alpha)\mid \alpha,\beta\in A,\ \alpha\leq\beta,\ 
x_\alpha\in \cX_\alpha\}.
\end{equation}

The \emph{inductive limit locally convex space} $(\cX;\{\chi_\alpha\}_{\alpha\in A})$ 
of the inductive system of locally convex
spaces $(\{\cX_\alpha\}_{\alpha\in A};\{\chi_{\beta,\alpha}\}_{\alpha\leq\beta})$ is defined as follows.
Firstly,
\begin{equation}\label{e:indlim}
\cX=\varinjlim_{\alpha\in A}\cX_\alpha=
\bigl(\bigoplus_{\alpha\in A}\cX_\alpha\bigr)/\cX_0.
\end{equation}
Then, for arbitrary $\alpha\in A$, 
the canonical linear map $\chi_\alpha\colon\cX_\alpha\ra\varinjlim_{\alpha\in A}\cX_\alpha$ is
defined as the composition of the canonical embedding 
$\cX_\alpha\hookrightarrow\bigoplus_{\beta\in A}\cX_\beta$ with the quotient map
$\bigoplus_{\alpha\in A}\cX_\beta\ra \cX$. The inductive limit topology of
$\cX=\varinjlim_{\alpha\in A}\cX_\alpha$ is the strongest locally convex topology on 
$\cX$ that makes the linear maps $\chi_\alpha$ continuous, for all $\alpha\in A$.

An important distinction with respect to the projective limit is that, under the assumption that all 
locally convex spaces $\cX_\alpha$, $\alpha\in A$, are Hausdorff, 
the inductive limit topology may
not be Hausdorff, unless the subspace $\cX_0$ is closed in $\bigoplus_{\alpha\in A}\cX_\beta$.
Also, in general, the inductive limit of an inductive system of complete locally convex spaces
is not complete. 

With notation as before, a locally convex space $\cY$, together with a net of continuous 
linear maps $\kappa_\alpha\colon \cX_\alpha\ra\cY$, $\alpha\in A$, is \emph{compatible} with
the inductive
system $(\{\cX_\alpha\}_{\alpha\in A};\{\chi_{\beta,\alpha}\}_{\alpha\leq\beta})$ if
\begin{equation} \kappa_\alpha=\kappa_\beta\circ\chi_{\beta,\alpha},\quad 
\alpha,\beta\in A,\ \alpha\leq\beta.
\end{equation}
For such a pair $(\cY;\{\kappa_\alpha)\}_{\alpha\in A}$, 
there always exists a unique continuous linear map 
$\kappa\colon \cY\ra \cX=\varinjlim_{\alpha\in A}\cX_\alpha$ such that 
\begin{equation}\kappa_\alpha=\kappa\circ\chi_\alpha,\quad \alpha\in A.
\end{equation}
Note that the inductive limit $(\cX;\{\chi_\alpha\}_{\alpha\in A})$ is compatible with
$(\{\cX_\alpha\}_{\alpha\in A};\{\chi_{\beta,\alpha}\}_{\alpha\leq\beta})$ 
and that, in this sense, the inductive limit $(\cX;\chi_\alpha\}_{\alpha\in A})$ 
is uniquely determined by the inductive system 
$(\{\cX_\alpha\}_{\alpha\in A};\{\chi_{\beta,\alpha}\}_{\alpha\leq\beta})$.

Let $(\cX;\{\chi_\alpha\}_{\alpha\in A})$, $\cX=\varinjlim_{\alpha\in A}\cX_\alpha$, and 
$(\cY;\{\kappa_\alpha\}_{\alpha\in A})$, $\cY=\varinjlim_{\alpha\in A}\cY_\alpha$, 
be two inductive limits of locally convex spaces. A linear map $g\colon \cX\ra\cY$ is called
\emph{coherent} if
\begin{itemize}
\item[(cim)] There exists $\{g_\alpha\}_{\alpha\in A}$ a net of linear maps 
$g_\alpha\colon\cX_\alpha\ra\cY_\alpha$, $\alpha\in A$, such that 
$g\circ\chi_\alpha= \kappa_\alpha\circ g_\alpha$ for all $\alpha\in A$.
\end{itemize}
In terms of the underlying inductive systems 
$(\{\cX_\alpha\}_{\alpha\in A};\{\chi_{\beta,\alpha}\}_{\alpha\leq\beta})$ and 
$(\{\cY_\alpha\}_{\alpha\in A};\{\kappa_{\beta,\alpha}\}_{\alpha\leq\beta})$, (cim) is equivalent with
\begin{itemize}
\item[(cim)$^\prime$] There exists $\{g_\alpha\}_{\alpha\in A}$ a net of linear maps 
$g_\alpha\colon\cX_\alpha\ra\cY_\alpha$, $\alpha\in A$, such that $\kappa_{\beta,\alpha}\circ
g_\alpha=g_\beta\circ \chi_{\beta,\alpha}$, for all $\alpha,\beta\in A$ with $\alpha\leq\beta$.
\end{itemize}
There is a one-to-one correspondence between the class of all coherent linear maps 
$g\colon\cX\ra\cY$ and the class of all nets $\{g_\alpha\}_{\alpha\in A}$ as in (cim) or, 
equivalently, as in (cim)$^\prime$.
It is clear that a coherent linear map $g\colon \cX\ra\cY$ is continuous if and only 
$g_\alpha\colon\cX_\alpha\ra\cY_\alpha$ 
is continuous for all $\alpha\in A$.

In the following we recall the special case of a \emph{strictly inductive system}. Assume that we have an inductive system $(\{\cX_\alpha\}_{\alpha\in A};
\{\chi_{\beta,\alpha}\}_{\alpha\leq\beta})$ 
of locally convex spaces such that, for all $\alpha,\beta\in A$ with $\alpha\leq\beta$, we
have $\cX_\alpha\subseteq\cX_\beta$, the linear map $\chi_{\beta,\alpha}\colon \cX_\alpha
\hookrightarrow\cX_\beta$ is the inclusion map, $\chi_{\beta,\alpha}(x)=x$ for all $x\in\cX_\alpha$, 
and that the inductive system is \emph{strict} in the sense that the
topology on $\cX_\alpha$ is the same with the induced topology of $\cX_\beta$ on its subspace
$\cX_\alpha$, for all $\alpha,\beta\in A$ with $\alpha\leq\beta$. Then, with notation as in 
\eqref{e:xzero} and \eqref{e:indlim}, observe the canonical identification,
\begin{equation} \varinjlim_{\alpha\in A} \cX_\alpha
=\bigoplus_{\alpha\in A} \cX_\alpha / \cX_0=\bigcup_{\alpha\in A}\cX_\alpha.
\end{equation}
For arbitrary $\alpha\in A$, the canonical map $\chi_\alpha\colon \cX_\alpha\ra\cX$ is the inclusion
map.

Even in the case of a strictly inductive system of Hausdorff locally convex spaces, the inductive
limit locally convex space may not be Hausdorff, cf.\ \cite{Komura}.

\subsection{Locally Hilbert Spaces.} \label{ss:lhs} By definition, $\{\cH_\lambda\}_{\lambda\in
\Lambda}$ is a \emph{strictly inductive system of Hilbert spaces}  if
\begin{itemize}
\item[(lhs1)] $(\Lambda;\leq)$ is a directed poset; 
\item[(lhs2)] $\{\cH_\lambda\}_{\lambda\in\Lambda}$ is a net of Hilbert spaces
$(\cH_\lambda; \langle\cdot,\cdot\rangle_{\cH_\lambda})$, $\lambda\in\Lambda$;
\item[(lhs3)] for each $\lambda,\mu\in\Lambda$ with $\lambda\leq \mu$  we have 
$\cH_\lambda\subseteq\cH_\mu$; 
\item[(lhs4)] for each $\lambda,\mu\in\Lambda$ with $\lambda\leq\mu$ the inclusion map 
$J_{\mu,\lambda}\colon \cH_\lambda\ra\cH_\mu$ is isometric, that is, 
\begin{equation}\langle x,y\rangle_{\cH_\lambda}=\langle x,y\rangle_{\cH_\mu},\mbox{ for all }
x,y\in\cH_\lambda.\end{equation}
\end{itemize} 

\begin{lemma}\label{l:lhs} For any strictly inductive system of Hilbert spaces 
$\{\cH_\lambda\}_{\lambda\in\Lambda}$, its inductive limit 
$\cH=\varinjlim_{\lambda\in\Lambda}\cH_\lambda$ is a Hausdorff locally convex space.
\end{lemma}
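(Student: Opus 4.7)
The plan is to construct explicitly a Hausdorff locally convex topology on $\cH=\bigcup_{\lambda\in\Lambda}\cH_\lambda$ for which each canonical embedding $\chi_\lambda\colon\cH_\lambda\to\cH$ is continuous, and then invoke the universal property of the inductive limit topology to conclude. The idea is that the strictness combined with the isometric assumption (lhs4) is strong enough to glue the individual Hilbert inner products into a single inner product on $\cH$.

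First, I would define an inner product $\langle\cdot,\cdot\rangle_\cH$ on $\cH$ as follows. Given $x,y\in\cH$, use the directedness of $(\Lambda;\leq)$ together with (lhs3) to choose $\lambda\in\Lambda$ with $x,y\in\cH_\lambda$, and set $\langle x,y\rangle_\cH:=\langle x,y\rangle_{\cH_\lambda}$. To verify well-definedness, suppose $\mu\in\Lambda$ also satisfies $x,y\in\cH_\mu$; pick $\nu\in\Lambda$ with $\nu\geq\lambda,\mu$, and apply (lhs4) twice, to the inclusions $J_{\nu,\lambda}$ and $J_{\nu,\mu}$, to obtain
\begin{equation*}
\langle x,y\rangle_{\cH_\lambda}=\langle x,y\rangle_{\cH_\nu}=\langle x,y\rangle_{\cH_\mu}.
\end{equation*}
Sesquilinearity and positive definiteness then follow immediately from the corresponding properties on each $\cH_\lambda$, so $\langle\cdot,\cdot\rangle_\cH$ is a genuine inner product on $\cH$, with associated norm $\|\cdot\|_\cH$.

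Next, let $\tau_\cH$ denote the locally convex (in fact normed) topology on $\cH$ induced by $\|\cdot\|_\cH$; this topology is obviously Hausdorff. By construction, for each $\lambda\in\Lambda$ the inclusion $\chi_\lambda\colon\cH_\lambda\to(\cH,\tau_\cH)$ is isometric, hence continuous. Since the inductive limit topology $\tau$ on $\cH$ is by definition the strongest locally convex topology making every $\chi_\lambda$ continuous, one has $\tau_\cH\subseteq\tau$. A topology finer than a Hausdorff topology is itself Hausdorff, so $\tau$ is Hausdorff, as required.

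The only subtlety is the verification that the prescribed inner product on $\cH$ is independent of the choice of $\lambda$; this is not a genuine obstacle, but it is the crux of the argument, as it uses in an essential way both the directedness of $\Lambda$ and the isometric compatibility in (lhs4). Everything else is a clean application of the universal property of the inductive limit topology.
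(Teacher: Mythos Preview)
Your argument is correct and follows essentially the same route as the paper: define a global inner product on $\cH$ by restriction to any $\cH_\lambda$ containing the given vectors, observe that the resulting norm topology is Hausdorff and makes each inclusion continuous, and conclude that the finer inductive limit topology is Hausdorff. The only difference is that you spell out the well-definedness check via a common upper bound $\nu\geq\lambda,\mu$, whereas the paper simply asserts correctness.
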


\begin{proof} As in Subsection~\ref{ss:illcs},
for each $\lambda\in\Lambda$, letting $J_\lambda\colon\cH_\lambda\ra\cH$ be the inclusion
of $\cH_\lambda$ in $\bigcup\limits_{\lambda\in\Lambda}\cH_\lambda$, the inductive limit
topology on $\cH$ is the strongest locally convex 
topology on $\cH$ that makes the linear maps $J_\lambda$ 
continuous for all $\lambda\in\Lambda$. 

On $\cH$ a canonical inner product $\langle\cdot,\cdot\rangle_\cH$ can be defined as follows: 
\begin{equation}\label{e:lip}
\langle h,k\rangle_\cH=\langle h,k\rangle_{\cH_\lambda},\quad h,k\in\cH,
\end{equation} where $\lambda\in\Lambda$ is any index for which $h,k\in\cH_\lambda$. It
follows that this definition of the inner product is correct and, for each $\lambda\in\Lambda$, 
the inclusion map $J_\lambda\colon(\cH_\lambda;\langle\cdot,\cdot\rangle_{\cH_\lambda})\ra
(\cH;\langle\cdot,\cdot\rangle_{\cH})$ is isometric. This implies that, letting $\|\cdot\|_\cH$ 
denote the norm induced by the inner product $\langle\cdot,\cdot\rangle_\cH$ on $\cH$, the
norm topology on $\cH$ is weaker than the inductive limit topology of $\cH$. Since the norm
topology is Hausdorff, it follows that the inductive limit topology on $\cH$ is Hausdorff as well.
\end{proof}

A \emph{locally Hilbert space}, see \cite{Inoue}, \cite{Joita1}, 
\cite{GasparGasparLupa}, is, by definition,
the inductive limit
\begin{equation}\label{e:injlim} 
\cH=\varinjlim\limits_{\lambda\in\Lambda}\cH_\lambda=\bigcup_{\lambda\in\Lambda}
\cH_\lambda,
\end{equation} of a strictly inductive system $\{\cH_\lambda\}_{\lambda\in\Lambda}$ of 
Hilbert spaces. We stress the fact that, a locally Hilbert space is rather a special type 
of locally convex space and, in general, not a Hilbert space. It is clear that a locally Hilbert space is 
uniquely determined by the strictly inductive system of Hilbert spaces.

\subsection{Locally Bounded Operators.}\label{ss:lbo}
With notation as in Subsection~\ref{ss:lhs}, let $\cH=\varinjlim_{\lambda\in A}\cH_\lambda$ and 
$\cK=\varinjlim_{\lambda\in A}\cK_\lambda$ be two locally Hilbert spaces generated by strictly
inductive systems of Hilbert spaces 
$(\{\cH_\lambda\}_{\lambda\in\Lambda};\{J_{\mu,\lambda}^\cH\}_{\lambda\leq\mu})$ 
and, respectively, 
$(\{\cK_\lambda\}_{\lambda\in\Lambda};\{J_{\mu,\lambda}^\cK\}_{\lambda\leq\mu})$,
indexed on the same directed poset $\Lambda$. A linear map 
$T\colon \cH\ra\cK$ is called a \emph{locally bounded operator} if $T$ is a continuous 
coherent linear map (as defined in Subsection~\ref{ss:illcs}) and adjointable, 
more precisely,
\begin{itemize}
\item[(lbo1)] There exists a net of operators $\{T_\lambda\}_{\lambda\in\Lambda}$, 
with $T_\lambda\in\cB(\cH_\lambda,\cK_\lambda)$ such that 
$TJ_\lambda^\cH=J_\lambda^\cK T_\lambda$ for all $\lambda\in\Lambda$. 
\item[(lbo2)] The net of operators $\{T_\lambda^*\}_{\lambda\in\Lambda}$ 
is coherent as well, that is, $T_\mu^* J_{\mu,\lambda}^\cK=J_{\mu,\lambda}^\cH T_\lambda^*$,
for all $\lambda,\mu\in \Lambda$ such that $\lambda\leq\mu$.
\end{itemize}
We denote by $\Lloc(\cH,\cK)$ the collection of all locally bounded operators $T\colon\cH\ra\cK$.
It is easy to see that $\Lloc(\cH,\cK)$ is a vector space.

\begin{remarks}\label{r:lbo}
(1) The correspondence between $T\in\Lloc(\cH,\cK)$ and the net of operators 
$\{T_\lambda\}_{\lambda\in\Lambda}$ as in (lbo1) and (lb02) is one-to-one. 
Given $T\in\Lloc(\cH,\cK)$,
for arbitrary $\lambda\in\Lambda$ we have $T_\lambda h=Th$, for all $h\in\cH_\lambda$, with the
observation that $Th\in\cK_\lambda$. Conversely, if $\{T_\lambda\}_{\lambda\in\Lambda}$ is a 
net of operators $T_\lambda\in\cB(\cH_\lambda,\cK_\lambda)$ satisfying (lbo2), then letting
$Th=T_\lambda h$ for arbitrary $h\in\cH$, where $\lambda\in\Lambda$ is such that 
$h\in\cH_\lambda$, it follows that $T$ is a locally bounded operator: 
this definition is correct by (lb01).
With an abuse of notation, but which is explained below
and makes perfectly sense, we will use the notation 
\begin{equation}\label{e:tpl} 
T=\varprojlim\limits_{\lambda\in\Lambda}T_\lambda.\end{equation}

(2) Let $T\colon\cH\ra\cK$ be a linear operator. Then $T$ is locally bounded if and only if:
\begin{itemize}
\item[(i)] For all $\lambda \in\Lambda$ we have $T\cH_\lambda\subseteq\cK_\lambda$ and,
letting $T_\lambda:=T|\cH_\lambda\colon\cH_\lambda\ra\cK_\lambda$, $T_\lambda$ is bounded.
\item[(ii)] For all $\lambda,\mu\in\Lambda$ with $\lambda\leq\mu$, we have 
$T_\mu\cH_\lambda\subseteq \cK_\lambda$ and $T_\mu^*\cK_\lambda\subseteq\cH_\lambda$. 
\end{itemize}

(3) The notion of locally bounded operator $T\colon \cH\ra \cK$
coincides with the concept introduced in Section 5 of \cite{Inoue}, with that from 
Definition~1.5 in \cite{Joita1}, with the concept of
"locally operator" as in \cite{GasparGasparLupa}, and with the concept of "operator" at p.~61 in \cite{KariaParmar}, that is,
\begin{itemize}
\item[(a)] there exists a net of operators
$\{T_\lambda\}_{\lambda\in\Lambda}$, with $T_\lambda\in\cB(\cH_\lambda,\cK_\lambda)$ 
for all $\lambda\in\Lambda$;
\item[(b)] $T_\mu J_{\mu,\lambda}^\cH=J_{\mu,\lambda}^\cK T_\lambda$, for all 
$\lambda\leq \mu$;
\item[(c)] $T_\mu P_{\lambda,\mu}^\cH=P_{\lambda,\mu}^\cK T_\mu$, for all 
$\lambda\leq\mu$, where $P^\cH_{\lambda,\mu}$ is the orthogonal projection of $\cH_\mu$ onto
its subspace $\cH_\lambda$.
\item[(d)] for arbitrary $h\in\cH$ we have 
$Th=T_\lambda h$, where $\lambda\in\Lambda$ is any index such that $h\in\cH_\lambda$. 
\end{itemize}

Observe that the relation in (d) is correct: if $h\in\cH_\lambda$ and $h\in\cH_\mu$,
then for any $\nu\in\Lambda$ with $\nu\geq\lambda,\mu$ (since $\Lambda$ is directed, such a 
$\nu$ always exists), by (b) we have
\begin{equation*} J_{\nu,\lambda}^\cK T_\lambda h=T_{\nu}J_{\nu,\lambda}^\cH h
=T_{\nu}J_{\nu,\mu}^\cH h=J_{\nu,\mu}^\cK T_\mu h.
\end{equation*} 

(4) Any locally bounded operator $T\colon\cH\ra\cK$ 
is continuous with respect to the inductive limit topologies of $\cH$ and $\cK$ but, in general, 
it may not be continuous with respect to the norm topologies of $\cH$ and $\cK$. An arbitrary
linear operator $T\in\Lloc(\cH,\cK)$ is continuous with respect to the norm topologies of $\cH$ 
and $\cK$ if and only if, with respect to the notation as in (lbo1) and (lbo2), 
$\sup_{\lambda\in\Lambda}\|T_\lambda\|_{\cB(\cH_\lambda,\cK_\lambda)}<\infty$. In this case,
the operator $T$ uniquely extends to an operator $\widetilde T\in\cB(\widetilde\cH,\widetilde\cK)$, 
where
$\widetilde\cH$ and $\widetilde\cK$ are the Hilbert space completions of $\cH$ and, respectively,
$\cK$, and $\|\widetilde T\|
=\sup_{\lambda\in\Lambda}\|T_\lambda\|_{\cB(\cH_\lambda,\cK_\lambda)}$.
\end{remarks}

For each $\lambda,\mu\in\Lambda$ with $\lambda\leq\mu$, consider the linear map 
$\pi_{\lambda,\mu}\colon \cB(\cH_\mu,\cK_\mu)\ra\cB(\cH_\lambda,\cK_\lambda)$ defined by
\begin{equation}\label{e:plm} 
 \pi_{\lambda,\mu}(T)={J^{\cK\,*}_{\mu,\lambda}}T J^\cH_{\mu,\lambda},\quad 
T\in\cB(\cH_\mu,\cK_\mu).
\end{equation}
Then $(\{\cB(\cH_\lambda,\cK_\lambda)\}_{\lambda\in\Lambda};
\{\pi_{\lambda,\mu}\}_{\lambda\leq\mu})$ is a projective system of Banach spaces and, letting 
$\varprojlim_{\lambda\in\Lambda}\cB(\cH_\lambda,\cK_\lambda)$ denote its locally convex 
projective limit, there is a canonical embeddingg
\begin{equation}\label{e:lbl} 
\Lloc(\cH,\cK)\subseteq\varprojlim_{\lambda\in\Lambda}\cB(\cH_\lambda,\cK_\lambda).
\end{equation}
With respect to the embedding in \eqref{e:lbl}, 
for an arbitrary element $\{T_\lambda\}_{\lambda\in\Lambda}\in
\varprojlim_{\lambda\in\Lambda} \cB(\cH_\lambda,\cK_\lambda)$, the following assertions are
equivalent:
\begin{itemize}
\item[(i)]  $\{T_\lambda\}_{\lambda\in\Lambda}\in\Lloc(\cH,\cK)$. 
\item[(ii)] The axiom (lbo2) holds.
\item[(iii)] For all $\lambda,\mu\in\Lambda$ with $\lambda\leq\mu$, we have 
$T_\mu\cH_\lambda\subseteq \cK_\lambda$ and $T_\mu^*\cK_\lambda\subseteq\cH_\lambda$.
\end{itemize}
As a consequence of \eqref{e:lbl}, $\Lloc(\cH,\cK)$ has a natural locally convex topology, 
induced by the projective limit locally convex topology of 
$\varprojlim_{\lambda\in\Lambda}\cB(\cH_\lambda,\cK_\lambda)$, more precisely, 
generated by the seminorms $\{q_\lambda\}_{\lambda\in\Lambda}$ defined by
\begin{equation}\label{e:qmt}
q_\mu(T)=\|T_\mu\|_{\cB(\cH_\mu,\cK_\mu)},\quad T
=\{T_\lambda\}_{\lambda\in\Lambda}\in\varprojlim_{\lambda\in\Lambda}
\cB(\cH_\lambda,\cK_\lambda).
\end{equation}
Also, it is easy to see that, with respect to the embedding \eqref{e:lbl}, $\Lloc(\cH,\cK)$ is closed
in $\varprojlim_{\lambda\in\Lambda}\cB(\cH_\lambda,\cK_\lambda)$, hence complete.
               
The locally convex space $\Lloc(\cH,\cK)$ can be organised as a projective limit of Banach spaces, 
in view of \eqref{e:lbl}, more precisely, letting 
$\pi_\mu\colon \varprojlim_{\lambda\in\Lambda} \cB(\cH_\lambda,\cK_\lambda)\ra 
\cB(\cH_\mu,\cK_\mu)$ 
be the canonical projection, we first determine the range of $\pi_\mu$. 
To this end, let us consider 
$\Lambda_\mu=\{\lambda\in\Lambda\mid \lambda\leq \mu\}$, the branch of $\Lambda$ determined
by $\mu$, and note that, with the induced order $\leq$, $\Lambda_\mu$ is a directed poset,
that $(\{\cH_\lambda\}_{\lambda\in\Lambda_\mu};
\{J^\cH_{\gamma,\lambda}\}_{\lambda\leq\gamma\leq\mu})$ and
$(\{\cH_\lambda\}_{\lambda\in\Lambda_\mu};
\{J^\cK_{\gamma,\lambda}\}_{\lambda\leq\gamma\leq\mu})$
are strictly inductive systems of Hilbert
spaces such that $\cH_\mu=\bigcup_{\lambda\in\Lambda_\mu}\cH_\lambda
=\varinjlim\limits_{\lambda\in\Lambda_\mu}\cH_\lambda$ and
$\cK_\mu=\bigcup_{\lambda\in\Lambda_\mu}\cK_\lambda
=\varinjlim\limits_{\lambda\in\Lambda_\mu}\cK_\lambda$,
 and that 
$\pi_\mu(\Lloc(\cH,\cK))=\Lloc(\cH_\mu,\cK_\mu)$ is a Banach subspace of 
$\cB(\cH_\mu,\cK_\mu)$. Consequently,
\begin{equation}\label{e:pll}
\Lloc(\cH,\cK)=\varprojlim_{\lambda\in\Lambda} \Lloc(\cH_\lambda,\cK_\lambda).
\end{equation}

To any operator $T\in\Lloc(\cH,\cK)$ 
one uniquely associates an operator 
$T^*\in\Lloc(\cK,\cH)$ 
called the \emph{adjoint} of $T$ and defined as follows: if $T=\varprojlim\limits_{\lambda\in
\Lambda}T_\lambda$ is associated to 
$\{T_\lambda\}_{\lambda\in\Lambda}$ then $T^*=\varprojlim\limits_{\lambda\in\Lambda}
T_\lambda^*$ is associated to the net 
$\{T_\lambda^*\}_{\lambda\in\Lambda}$. Most of the usual algebraic properties of adjoint operators 
in Hilbert spaces remain true, in particular, the classes of \emph{locally isometric}, 
\emph{locally coisometric}, and that of \emph{locally unitary} operators make sense and have, 
to a certain extent, expected properties.

\subsection{Tensor Products of Locally Hilbert Spaces.} \label{ss:tplhs}
We first recall that the Hilbert space tensor product $\cS\otimes \cL$ of two Hilbert spaces 
$\cS$ and $\cL$ is obtained as the Hilbert space completion of the algebraic tensor product space 
$\cS\otimes_{\mathrm{alg}}\cL$, with inner product 
$\langle\cdot,\cdot\rangle_{\cS\otimes_{\mathrm{alg}}\cL}$
defined on elementary tensors by
$\langle s\otimes l,t\otimes k\rangle_{\cS\otimes_{\mathrm{alg}}\cL}
=\langle s,t\rangle_\cS\,\langle l,k\rangle_\cL$ and then 
extended by linearity to $\cS\otimes_{\mathrm{alg}}\cL$.

We also recall that, for two Hilbert spaces $\cS$ and $\cL$ and two operators $X\in\cB(\cS)$ and 
$Y\in\cB(\cL)$, the operator $X\otimes Y\in\cB(\cS\otimes\cL)$ is defined first by letting 
$(X\otimes Y)(s\otimes l)=Xs\otimes Yl$ for arbitrary $s\in\cS$ and $l\in\cL$, then extended by 
linearity to $\cS\otimes_{\mathrm{alg}}Y$, and finally extended by continuity, taking into account 
that $\|X\otimes Y\|=\|X\|\,\|Y\|$. In addition, $(X\otimes Y)^*=X^*\otimes Y^*$, and from here
other expected properties follow in a natural way.

\begin{proposition}\label{p:tplhs}
Let $\cH=\varinjlim\limits_{\lambda\in\Lambda}\cH_\lambda$ and 
$\cK=\varinjlim\limits_{\alpha\in A}\cK_\alpha$ 
be two locally Hilbert spaces, where $\Lambda$ 
and $A$ are two directed posets. Then
$\{\cH_\lambda\otimes\cK_\alpha\}_{(\lambda,\alpha)\in\Lambda\times A}$ can be naturally 
organised as a strictly inductive system of Hilbert spaces.
\end{proposition}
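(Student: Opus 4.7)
The plan is to equip $\Lambda\times A$ with its componentwise partial order, to recognise each $\cH_\lambda\otimes\cK_\alpha$ as a Hilbert space via the construction recalled in Subsection~\ref{ss:tplhs}, and for each $(\lambda,\alpha)\leq(\mu,\beta)$ to produce an isometric embedding $\cH_\lambda\otimes\cK_\alpha\hookrightarrow \cH_\mu\otimes\cK_\beta$ from the tensor product of the isometric inclusions $J^\cH_{\mu,\lambda}$ and $J^\cK_{\beta,\alpha}$; identifying $\cH_\lambda\otimes\cK_\alpha$ with its image under this embedding then turns the abstract isometric embedding into the set-theoretic inclusion required by (lhs3).

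First I would verify the analogue of (lhs1) for $\Lambda\times A$: given two pairs $(\lambda_i,\alpha_i)$, $i=1,2$, directedness of $\Lambda$ and of $A$ yields $\mu\in\Lambda$ with $\mu\geq\lambda_1,\lambda_2$ and $\beta\in A$ with $\beta\geq\alpha_1,\alpha_2$, so that $(\mu,\beta)$ dominates both pairs; (lhs2) is immediate since each $\cH_\lambda\otimes\cK_\alpha$ is a Hilbert space.

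For (lhs3) and (lhs4), fix $(\lambda,\alpha)\leq(\mu,\beta)$ and observe that, on elementary tensors with $h,h'\in\cH_\lambda$ and $k,k'\in\cK_\alpha$,
\begin{equation*}
\langle h\otimes k,h'\otimes k'\rangle_{\cH_\mu\otimes\cK_\beta}=\langle h,h'\rangle_{\cH_\mu}\langle k,k'\rangle_{\cK_\beta}=\langle h,h'\rangle_{\cH_\lambda}\langle k,k'\rangle_{\cK_\alpha},
\end{equation*}
where the second equality uses that $J^\cH_{\mu,\lambda}$ and $J^\cK_{\beta,\alpha}$ are isometric. By sesquilinear extension, the tensor map $J^\cH_{\mu,\lambda}\otimes J^\cK_{\beta,\alpha}$ on the algebraic tensor product preserves inner products, and so extends by continuity to an isometry $I_{(\mu,\beta),(\lambda,\alpha)}\colon\cH_\lambda\otimes\cK_\alpha\to\cH_\mu\otimes\cK_\beta$. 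Identifying $\cH_\lambda\otimes\cK_\alpha$ with its closed image under $I_{(\mu,\beta),(\lambda,\alpha)}$ produces both the inclusion $\cH_\lambda\otimes\cK_\alpha\subseteq\cH_\mu\otimes\cK_\beta$ required by (lhs3) and the coincidence of inner products required by (lhs4).

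The remaining point, which I regard as the main obstacle because it is what makes the identifications defensible, is the coherence of these inclusions along chains $(\lambda,\alpha)\leq(\mu,\beta)\leq(\nu,\gamma)$: one must verify that
\begin{equation*}
I_{(\nu,\gamma),(\lambda,\alpha)}=I_{(\nu,\gamma),(\mu,\beta)}\circ I_{(\mu,\beta),(\lambda,\alpha)},
\end{equation*}
so that the set-theoretic inclusion $\cH_\lambda\otimes\cK_\alpha\subseteq\cH_\nu\otimes\cK_\gamma$ is independent of the path chosen. On elementary tensors this reduces to the transitivity relations $J^\cH_{\nu,\lambda}=J^\cH_{\nu,\mu}J^\cH_{\mu,\lambda}$ and $J^\cK_{\gamma,\alpha}=J^\cK_{\gamma,\beta}J^\cK_{\beta,\alpha}$ of the underlying strictly inductive systems, and then propagates by linearity to the algebraic tensor product and by continuity to the Hilbert space completion. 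Once this compatibility is established, the family $\{\cH_\lambda\otimes\cK_\alpha\}_{(\lambda,\alpha)\in\Lambda\times A}$ satisfies all of (lhs1)--(lhs4).
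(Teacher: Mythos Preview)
Your argument is correct and covers all the required axioms (lhs1)--(lhs4); the transitivity check you single out is exactly what makes the successive identifications consistent.

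The paper reaches the same conclusion by a slightly different route. Rather than building the isometries $I_{(\mu,\beta),(\lambda,\alpha)}$ pairwise and then verifying their compatibility, it first observes that the algebraic tensor products $\cH_\lambda\otimes_{\mathrm{alg}}\cK_\alpha$ already form an inductive system of linear spaces with union $\cH\otimes_{\mathrm{alg}}\cK$, puts the canonical inner product on this union, and takes a single Hilbert space completion $\widetilde{\cH\otimes_{\mathrm{alg}}\cK}$. Each $\cH_\lambda\otimes\cK_\alpha$ is then \emph{defined} as the closure of $\cH_\lambda\otimes_{\mathrm{alg}}\cK_\alpha$ inside this one ambient space, so the inclusions and the coherence along chains are automatic and need no separate verification. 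Your approach trades that global construction for an explicit transitivity computation; it is equally valid, and arguably more elementary, but the paper's ambient space $\widetilde{\cH\otimes_{\mathrm{alg}}\cK}$ has the side benefit of being available later as a concrete model for $\cH\otimes_{\mathrm{loc}}\cK$.
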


\begin{proof} With notation as in Subsection~\ref{ss:lhs},
we consider $\Lambda\times A$ with the partial order 
$(\lambda,\alpha)\leq (\mu,\beta)$ if $\lambda\leq\mu$ and $\alpha\leq \beta$, for arbitrary 
$\lambda,\mu\in\Lambda$ and $\alpha,\beta\in A$, and observe that, with this order, 
$\Lambda\times A$ is directed. For each $(\lambda,\alpha)\in \Lambda\times A$, 
consider the algebraic tensor product space $\cH_\lambda\otimes_{\mathrm{alg}} \cK_\alpha$ 
with inner product $\langle\cdot,\cdot\rangle_{\lambda,\alpha}$ defined on elementary 
tensors by
\begin{equation*}\langle (h\otimes k),(g\otimes l)\rangle_{\lambda,\alpha}
=\langle h,g\rangle_{\cH_\lambda} \langle k,l\rangle_{\cK_\alpha}, \quad h,g\in\cH_\lambda,\ 
k,l\in\cK_\alpha,\ \lambda\in\Lambda,\ \alpha\in A,
\end{equation*}  and then extended by linearity. 
 Observe that
$\{\cH_\lambda\otimes_{\mathrm{alg}}\cK_\alpha\}_{(\lambda,\alpha)\in\Lambda\times A}$ 
is an inductive system of linear spaces and that
\begin{equation*}
\cH\otimes_{\mathrm{alg}}\cK=\bigcup_{(\lambda,\alpha)\in \Lambda\times A} 
\cH_\lambda\otimes_{\mathrm{alg}}\cK_\alpha.
\end{equation*}

On $\cH\otimes_{\mathrm{alg}}\cK$ there exists a canonical inner product: firstly,
for arbitrary $h,k\in\cH\otimes_{\mathrm{alg}}\cK$, let
\begin{equation}\label{e:lhtpip}
\langle h,k\rangle_{\cH\otimes_{\mathrm{alg}}\cK}=\langle h,k\rangle_{\lambda,\alpha},
\end{equation} where $\lambda\in\Lambda$ and $\alpha\in A$ are such that $h,k\in\cH_\lambda
\otimes_{\mathrm{alg}}\cK_\alpha$, and then extend 
$\langle\cdot,\cdot\rangle_{\cH\otimes_{\mathrm{alg}}\cK}$ to the whole space 
$\cH\otimes_{\mathrm{alg}}\cK$ by linearity, to
a genuine inner product. Let $ \widetilde{\cH\otimes_{\mathrm{alg}}\cK}$ 
be the completion of the inner product space $(\cH\otimes_{\mathrm{alg}}\cK;
\langle\cdot,\cdot\rangle_{\cH\otimes_{\mathrm{alg}}}\cK)$ to a Hilbert space and observe that, 
for any 
$\lambda\in\Lambda$ and $\alpha\in A$, the inner product space 
$(\cH_\lambda\otimes_{\mathrm{alg}}\cK_\alpha;\langle\cdot,\cdot\rangle_{\lambda,\alpha})$ 
is isometrically included in the Hilbert space $ \widetilde{\cH\otimes_{\mathrm{alg}}\cK}$, 
hence we can take the Hilbert space tensor
product $\cH_\lambda\otimes\cK_\alpha$ as the closure of 
$(\cH_\lambda\otimes_{\mathrm{alg}}\cK_\alpha;\langle\cdot,\cdot\rangle_{\lambda,\alpha})$
inside of $ \widetilde{\cH\otimes_{\mathrm{alg}}\cK}$. 
In this way, the inductive system of Hilbert spaces
$\{\cH_\lambda\otimes\cK_\alpha\}_{(\lambda,\alpha)\in\Lambda\times A}$ is strict.
\end{proof}

With notation as in Proposition~\ref{p:tplhs}, the strictly inductive system of Hilbert spaces
$\{\cH_\lambda\otimes\cK_\alpha\}_{(\lambda,\alpha)\in\Lambda\times A}$
gives rise to a locally Hilbert space
\begin{equation}
\cH\otimes_{\mathrm{loc}}\cK=\varinjlim_{(\lambda,\alpha)\in\Lambda\times A} \cH_\lambda
\otimes \cK_\alpha=\bigcup_{(\lambda,\alpha)\in\Lambda\times A} \cH_\lambda\otimes \cK_\alpha,
\end{equation}
that we call the \emph{locally Hilbert space tensor product}.
The natural topology on 
$\cH\otimes_{\mathrm{loc}}\cK$ is considered the inductive limit topology. 
$\cH\otimes_{\mathrm{loc}}\cK$ is equipped with the inner product as in \eqref{e:lhtpip}
and is dense in the Hilbert space $\widetilde{\cH\otimes_{\mathrm{loc}}\cK}$ but, in general, 
they are not the same.

Let $T\in\Lloc(\cH)$ and $S\in\Lloc(\cK)$ be two locally bounded operators and define 
$T\otimes_{\mathrm{loc}}S\colon \cH\otimes_{\mathrm{loc}}\cK\ra \cH\otimes_{\mathrm{loc}}\cK$ 
as follows: if $T=\varprojlim_{\lambda\in\Lambda} T_\lambda$ and 
$S=\varprojlim_{\alpha\in A}S_\alpha$ 
then, observe that $\{T_\lambda\otimes S_\alpha\}_{(\lambda,\alpha)\in \Lambda\times A}$ is a 
projective net of bounded operators, in the sense that it satisfies the following properties:
\begin{itemize}
\item[(i)] $T_\mu\otimes S_\beta$ reduces $\cH_\lambda\otimes\cK_\alpha$ (that is,  
$\cH_\lambda\otimes\cK_\alpha$ is invariant under both $T_\mu\otimes S_\beta$ and its adjoint), 
for all $\lambda\leq \mu$ and $\alpha\leq\beta$.
\item[(ii)] $P_{\cH_\lambda\otimes\cK_\alpha}(T_\mu\otimes S_\beta)|(\cH_\lambda\otimes
\cK_\alpha)=T_\lambda\otimes S_\alpha$ , for all $\lambda\leq\mu$ and $\alpha\leq\beta$.
\end{itemize}
Consequently, we can define $T\otimes_{\mathrm{loc}} S\in\Lloc(\cH\otimes_{\mathrm{loc}}\cK)$
by
\begin{equation} \label{e:teloces}
T\otimes_{\mathrm{loc}}S
=\varprojlim_{(\lambda,\alpha)\in\Lambda\times A} T_\lambda\otimes S_\lambda,
\end{equation}
 and observe that
\begin{equation}\label{e:testares} 
 (T\otimes_{\mathrm{loc}}S)^*=T^*\otimes_{\mathrm{loc}}S^*.
\end{equation}
In particular, if $T=T^*$ and $S=S^*$ then $(T\otimes_{\mathrm{loc}}S)^*
=T\otimes_{\mathrm{loc}}S$ and, if both $T$ and $S$ are locally positive operators, then 
$T\otimes_{\mathrm{loc}}S$ is locally positive as well.

\subsection{Locally $C^*$-Algebras.} \label{ss:lcsa}
A $*$-algebra $\cA$ is called a \emph{locally $C^*$-algebra}
if it has a complete Hausdorff locally convex topology that is induced by a family of 
$C^*$-seminorms, that is,
seminorms $p$ with the property $p(a^*a)=p(a)^2$ for all $a\in\cA$, see \cite{Inoue}. 
Any $C^*$-seminorm $p$ has also the properties $p(a^*)=p(a)$ and $p(ab)\leq p(a)p(b)$ 
for all $a,b\in\cA$, cf.\ \cite{Sebestyen}. 
Locally $C^*$-algebras have been called also \emph{$LMC^*$-algebras} 
\cite{Schmudgen}, \emph{$b^*$-algebras} \cite{Allan}, and \emph{pro $C^*$-algebras} 
\cite{Voiculescu}, \cite{Phillips}. 

If $\cA$ is a locally $C^*$-algebra, let $S(\cA)$ denote the collection of all continuous 
$C^*$-seminorms and note that $S(\cA)$ is a directed poset, with respect to the partial order 
$p\leq q$ if $p(a)\leq q(a)$ for all $a\in\cA$. If $p\in S(\cA)$ then 
\begin{equation}\label{e:cisp} 
\cI_p=\{a\in\cA\mid p(a)=0\}
\end{equation} is a
closed two sided $*$-ideal of $\cA$ and $\cA_p=\cA/\cI_p$ becomes a $C^*$-algebra with respect 
to the $C^*$-norm $\|\cdot\|_p$ induced by $p$, see \cite{Apostol}, more precisely,
\begin{equation}\label{e:tlp} 
\|a+\cI_p\|_p=p(a),\quad a\in\cA.
\end{equation} 
Letting $\pi_p\colon \cA\ra\cA_p$ denote the 
canonical projection, for any $p,q\in S(\cA)$ such that $p\leq q$ there exists a canonical 
$*$-epimorphism of $C^*$-algebras $\pi_{p,q}\colon \cA_q\ra \cA_p$ such that 
$\pi_p=\pi_{p,q}\circ \pi_q$, with respect to which 
$\{\cA_p\}_{p\in S(\cA)}$ becomes a projective system of $C^*$-algebras such that 
\begin{equation}\label{e:alim}
\cA=\varprojlim\limits_{p\in S(\cA)} \cA_p,\end{equation}
see \cite{Schmudgen}, \cite{Phillips}. It is important to stress that this projective limit is taken
in the category of locally convex $*$-algebras and hence all the morphisms are continuous
$*$-morphisms of locally convex $*$-algebras, which make significant differences with respect to
projective limits of locally convex spaces, that we briefly recalled in Subsection~\ref{ss:pllcs}.

An \emph{approximate unit} of a locally $C^*$-algebra
$\cA$ is, by definition, an increasing net $(e_j)_{j\in\cJ}$ 
of positive elements in $\cA$ with $p(e_j)\leq 1$ for any 
$p\in S(\cA)$ and any $j\in\cJ$, satisfying 
$p(x-xe_j)\xrightarrow[j]{} 0$ and $p(x-e_jx)\xrightarrow[j]{} 0$ for 
all $p\in S_*(\cA)$ and all $x\in \cA$. For any locally $C^*$-algebra,
there exists an approximate unit, cf.\ \cite{Inoue}.

Letting $b(\cA)=\{a\in\cA\mid \sup_{p\in S(\cA)} p(a)<+\infty\}$, it follows that 
$\|a\|=\sup_{p\in S(\cA)} p(a)$ is a $C^*$-norm on the $*$-algebra $b(\cA)$ and, with respect to
this norm, $b(\cA)$ is a $C^*$-algebra, dense in $\cA$, see \cite{Apostol}. The elements
of $b(\cA)$ are called \emph{bounded}.

\begin{examples}\label{ex:lcalhs} Let
$\cH=\varinjlim\limits_{\lambda\in\Lambda}\cH_\lambda$
be a locally Hilbert space and $\Lloc(\cH)$ be the locally convex space of all locally 
bounded operators $T\colon \cH\ra\cH$, see Subsection~\ref{ss:lbo}. 

(1) In the following we show that $\Lloc(\cH)$ is  a locally $C^*$-algebra. Actually, we specialise 
\eqref{e:plm}--\eqref{e:pll} for $\cH=\cK$ and point out what additional structure we get. 
We first observe that $\Lloc(\cH)$ has a natural product and a natural involution $*$, 
with respect to which it is a $*$-algebra.
For each $\mu\in\Lambda$, consider the $C^*$-algebra 
$\cB(\cH_\mu)$ of all bounded linear operators in $\cH_\mu$ and 
$\pi_\mu\colon\Lloc(\cH)\ra\cB(\cH_\mu)$ be the canonical map: for any 
$T=\varprojlim\limits_{\lambda\in\Lambda}T_\lambda\in\Lloc(\cH)$, we have 
$\pi_\mu(T)=T_\mu$. Similarly as for \eqref{e:pll},
$\pi_\mu(\Lloc(\cH))=\Lloc(\cH_\mu)$ is a $C^*$-subalgebra of $\cB(\cH_\mu)$. 

It follows that $\pi_\mu\colon\Lloc(\cH)\ra\Lloc(\cH_\mu)$ is a $*$-epimorphism of $*$-algebras 
and, for each 
$\lambda,\mu\in \Lambda$ with $\lambda\leq\mu$, 
there is a unique $*$-epimorphism of $C^*$-algebras 
$\pi_{\lambda,\mu}\colon \Lloc(\cH_\mu)\ra\Lloc(\cH_\lambda)$, such that 
$\pi_\lambda=\pi_{\lambda,\mu}\pi_\mu$. More precisely, compare with \eqref{e:plm} 
and the notation as in Subsection~\ref{ss:lbo},
$\pi_{\lambda,\mu}$ is the compression of $\cH_\mu$ to $\cH_\lambda$,
\begin{equation}\label{e:plmh} 
\pi_{\lambda,\mu}(S)=J_{\mu,\lambda}^*SJ_{\mu,\lambda},\quad S\in\Lloc(\cH_\mu).
\end{equation}
Then $(\{\Lloc(\cH_\lambda)\}_{\lambda\in\Lambda};\{\pi_{\lambda,\mu}\}_{\lambda,\mu\in\Lambda,\ \lambda\leq \mu})$ is
a projective system of $C^*$-algebras, that is,
\begin{equation}\label{e:ple} 
\pi_{\lambda,\eta}=\pi_{\lambda,\mu}\circ\pi_{\mu,\eta},\quad \lambda,\mu,\eta\in\Lambda,\
\lambda\leq\mu\leq\eta,
\end{equation} and, in addition,
\begin{equation}\label{e:coh}
\pi_\mu(S)P_{\lambda,\mu}=P_{\lambda,\mu}\pi_\mu(S),\quad \lambda,\mu\in\Lambda,\ 
\lambda\leq\mu,\ S\in\Lloc(\cH_\mu),
\end{equation}
such that
\begin{equation}\label{e:loclhs} \Lloc(\cH)=\varprojlim_{\lambda\in\Lambda} \Lloc(\cH_\lambda),
\end{equation}
hence $\Lloc(\cH)$ is a locally $C^*$-algebra.

For each $\mu\in \Lambda$, letting $p_\mu\colon\Lloc(\cH)\ra\RR$ be defined by 
\begin{equation}\label{e:psmu}
 p_\mu(T)=\|T_\mu\|_{\cB(\cH_\mu)},\quad T=\varprojlim_{\lambda\in\Lambda}
T_\lambda\in\Lloc(\cH),
\end{equation} then $p_\mu$ is a $C^*$-seminorm on $\Lloc(\cH)$. Then $\Lloc(\cH)$ 
becomes a unital locally $C^*$-algebra with the topology induced by  
$\{p_\lambda\}_{\lambda\in\Lambda}$. 

The $C^*$-algebra $b(\Lloc(\cH))$ coincides with the set of all locally bounded operators 
$T=\varprojlim_{\lambda\in\Lambda}T_\lambda$ such that $\{T_\lambda\}_{\lambda\in\Lambda}$
is uniformly bounded, in the sense that $\sup_{\lambda\in\Lambda}\|T_\lambda\|<\infty$, 
equivalently, those locally bounded operators $T\colon \cH\ra\cH$ that are bounded with respect
to the canonical norm $\|\cdot\|_\cH$ on the pre-Hilbert space 
$(\cH;\langle\cdot,\cdot\rangle_\cH)$. In particular $b(\cA)$ is a $C^*$-subalgebra of 
$\cB(\widetilde\cH)$, where $\widetilde\cH$ denotes the completion of 
$(\cH;\langle\cdot,\cdot\rangle_\cH)$ to a Hilbert space.

(2) With notation as in item (1), let $\cA$ be an arbitray closed $*$-subalgebra of $\Lloc(\cH)$.  
On $\cA$ we consider the collection of $C^*$-seminorms $\{p_\mu|\cA\}_{ \mu\in\Lambda
}$, where the seminorms $p_\mu$ are defined as in \eqref{e:psmu}
and note that, with respect to it, $\cA$ is a locally $C^*$-algebra. The embedding 
$\iota\colon\cA\hookrightarrow\Lloc(\cH)$, in addition to being a $*$-monomorphism, has the 
property that, for each $\lambda\in\Lambda$, it induces a faithful $*$-morphism of $C^*$-algebras 
$\iota_\lambda\colon \cA_{p_\lambda}\hookrightarrow \cB(\cH_\lambda)$ such that, 
$\{\iota_\lambda\}_ {\lambda\in\Lambda}$ has the following properties as in Remark~\ref{r:lbo}.(3): 
for any $\lambda\leq \mu$,
\begin{equation}\label{e:ind} 
\iota_\mu(a_\mu)J_{\mu,\lambda}=
J_{\mu,\lambda}\iota_\lambda(a_\lambda),\quad a=\varprojlim_{\eta\in\Lambda} a_\eta\in \cA,
\end{equation}
\begin{equation}\label{e:coh2}
\iota_\mu(a)P_{\lambda,\mu}=P_{\lambda,\mu}\iota_\mu(a),\quad \ a\in\cA_\mu.
\end{equation}

Also, the $C^*$-algebra
$b(\cA)$ of bounded elements of $\cA$ is canonically embedded as a $C^*$-subalgebra of
$\cB(\widetilde{\cH})$, with notation as in the previous example.
\end{examples}

\begin{remark}\label{r:lsa}
With notation as in the previous examples, classes of operators as 
\emph{locally selfadjoint}, \emph{locally 
positive}, \emph{locally normal}, \emph{locally unitary}, \emph{locally orthogonal projection}, etc.\ 
can be defined in a natural fashion and have expected properties. For example, an operator 
$A=\varprojlim_{\lambda\in\Lambda}A_\lambda$ in $\Lloc(\cH)$ is locally selfadjoint if, by definition,
$A_\lambda=A_\lambda^*$ for all $\lambda$, equivalently, 
$\langle Ah,k\rangle_\cH=\langle h,Ak\rangle_\cH$ for all $h,k\in\cH$, equivalently $A=A^*$.
Similarly, an operator 
$A=\varprojlim_{\lambda\in\Lambda}A_\lambda$ in $\Lloc(\cH)$ is locally positive if, by definition,
$A_\lambda\geq 0$ for all $\lambda$, equivalently, 
$\langle Ah,h\rangle_\cH\geq 0$ for all $h\in\cH$. Then, it is easy to see that,
an arbitrary  operator $T\in\Lloc(\cH)$ is locally positive if and only if 
$T=S^*S$ for some $S\in\Lloc(\cH)$.
\end{remark}

Let $\cA=\varprojlim_{\lambda\in\Lambda}\cA_\lambda$ and 
$\cB=\varprojlim_{\lambda\in\Lambda}\cB_\lambda$ be two locally $C^*$-algebras, where 
$(\{\cA_\lambda\}_{\lambda\in\Lambda};\{\pi^\cA_\lambda\}_{\lambda\in\Lambda})$ and 
$(\{\cB_\lambda\}_{\lambda\in\Lambda};\{\pi_\lambda^\cB\}_{\lambda\in\Lambda})$ are the 
underlying $C^*$-algebras and canonical projections, over the same directed poset 
$\Lambda$. A $*$-morphism
$\rho\colon \cA\ra\cB$ is called \emph{coherent} if
\begin{itemize}
\item[(cam)] There exists $\{\rho_\lambda\}_{\lambda\in\Lambda}$ a net of $*$-morphisms
$\rho_\lambda\colon \cA_\lambda\ra\cB_\lambda$, $\lambda\in\Lambda$, such that 
$\pi_\lambda^\cB\circ \rho=\rho_\lambda\circ\pi_\lambda^\cA$, for all $\lambda\in\Lambda$.
\end{itemize}

\begin{remarks} (1)
Observe that any coherent $*$-morphism of locally $C^*$-algebras is continuous: this is a 
consequence of the fact that any $*$-morphism between $C^*$-algebras is automatically
continuous and the projectivity.

(2) With notation as before, a coherent $*$-morphism of locally $C^*$-algebras 
$\rho\colon\cA\ra\cB$ is faithful (one-to-one) if and only if, for all $\lambda\in\Lambda$, the
$*$-morphism $\rho_\lambda\colon\cA_\lambda\ra\cB_\lambda$ is faithful (one-to-one).
\end{remarks}

In case $\cB=\Lloc(\cH)=\varprojlim_{\lambda\in\Lambda}\Lloc(\cH_\lambda)$, 
where $\cH=\varinjlim_{\lambda\in\Lambda}\cH_\lambda$ is a locally Hilbert space, we talk about
a \emph{coherent $*$-representation} $\rho$ of $\cA$ on $\cH$ if $\rho\colon\cA\ra\Lloc(\cH)$
is a coherent $*$-morphism of locally $C^*$-algebras.

A locally $C^*$-algebra $\cA=\varprojlim\limits_{\lambda\in \Lambda} \cA_\lambda$, where 
$\{\cA_\lambda\mid \lambda\in\Lambda\}$ is a projective system of $C^*$-algebras over some 
directed poset $\Lambda$, for which there exists a locally Hilbert space 
$\cH=\varinjlim\limits_{\lambda\in\Lambda}\cH_\lambda$ such that, for each 
$\lambda\in\Lambda$ the $C^*$-algebra $\cA_\lambda$ is a closed $*$-algebra of 
$\cB(\cH_\lambda)$, is called a \emph{represented locally $C^*$-algebra} or a
\emph{concrete locally $C^*$-algebra}. 
Observe that, in this case, the natural embedding of $\cA$ in $\Lloc(\cH)$ is a coherent 
$*$-representation of $\cA$ on $\cH$.

The following analogue of the Gelfand-Naimark Theorem is essentially Theorem~5.1 in \cite{Inoue}.

\begin{theorem} \label{t:gelfandnaimark}
Any locally $C^*$-algebra $\cA$ can be coherently 
identified with some concrete locally $C^*$-algebra, 
more precisely, if $\cA=\varprojlim\limits_{\lambda\in \Lambda} \cA_\lambda$, 
where $\{\cA_\lambda\mid \lambda\in\Lambda\}$ is a projective system of $C^*$-algebras 
over some directed poset $\Lambda$, then there exists a locally Hilbert space 
$\cH=\varinjlim\limits_{\lambda\in\Lambda}\cH_\lambda$ and 
a faithful coherent $*$-representation $\pi\colon\cA\ra\Lloc(\cH)$.
\end{theorem}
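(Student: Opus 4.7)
The plan is to glue faithful Gelfand--Naimark representations of the individual quotient $C^*$-algebras $\cA_\lambda$ into a single coherent locally bounded representation indexed by the same poset $\Lambda$. First, for each $\lambda \in \Lambda$ I would invoke the classical Gelfand--Naimark theorem for the $C^*$-algebra $\cA_\lambda$ to produce a Hilbert space $\cL_\lambda$ and a faithful (hence isometric) $*$-representation $\sigma_\lambda \colon \cA_\lambda \to \cB(\cL_\lambda)$. Then, for each $\mu \in \Lambda$, I would form the Hilbert-space direct sum over the down-set of $\mu$,
\begin{equation*}
\cH_\mu = \bigoplus_{\lambda \leq \mu} \cL_\lambda,
\end{equation*}
observing that for $\mu \leq \nu$ the natural inclusion $\cH_\mu \hookrightarrow \cH_\nu$ (extending by zero on the new summands) is isometric. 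This renders $\{\cH_\mu\}_{\mu\in\Lambda}$ a strictly inductive system of Hilbert spaces in the sense of Subsection~\ref{ss:lhs}, whose inductive limit $\cH=\varinjlim_{\mu\in\Lambda}\cH_\mu$ is the desired locally Hilbert space.

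Next I would bundle the representations $\sigma_\lambda$ together by setting
\begin{equation*}
\rho_\mu(a) = \bigoplus_{\lambda \leq \mu} \sigma_\lambda(\pi_\lambda^\cA(a)),\quad a\in\cA,
\end{equation*}
where $\pi_\lambda^\cA \colon \cA \to \cA_\lambda$ is the canonical projection onto the quotient $C^*$-algebra. The key inequality $\|\sigma_\lambda(\pi_\lambda^\cA(a))\|=p_\lambda(a)\leq p_\mu(a)$, valid for all $\lambda\leq\mu$, makes $\rho_\mu(a)$ a bounded operator on $\cH_\mu$ with $\|\rho_\mu(a)\|=p_\mu(a)$, and $\rho_\mu$ is plainly a $*$-homomorphism. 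Because $\cI_{p_\mu}\subseteq \cI_{p_\lambda}$ for every $\lambda\leq\mu$, the map $\rho_\mu$ descends to a faithful and therefore isometric $*$-representation $\tilde\pi_\mu\colon\cA_\mu\to\cB(\cH_\mu)$.

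The sought representation $\pi\colon\cA\to\Lloc(\cH)$ is then defined unambiguously by
\begin{equation*}
\pi(a)h=\rho_\mu(a)h,\quad h\in\cH_\mu,
\end{equation*}
with well-definedness coming from the block-diagonal compatibility $\rho_\nu(a)|_{\cH_\mu}=\rho_\mu(a)$ for $\mu\leq\nu$. This same compatibility shows $\pi(a)\cH_\mu\subseteq\cH_\mu$, and applying the argument to $a^*$ yields $\pi(a)^*\cH_\mu\subseteq\cH_\mu$, verifying (lbo1)--(lbo2) of Subsection~\ref{ss:lbo}; thus each $\pi(a)$ lies in $\Lloc(\cH)$. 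The identity $\pi(a)|_{\cH_\mu}=\tilde\pi_\mu(\pi_\mu^\cA(a))$ then exhibits $\pi$ as a coherent $*$-representation in the sense of Subsection~\ref{ss:lcsa}. Faithfulness drops out immediately: if $\pi(a)=0$ then the $\mu$-th summand $\sigma_\mu(\pi_\mu^\cA(a))$ of $\rho_\mu(a)$ vanishes, so by faithfulness of $\sigma_\mu$ we get $p_\mu(a)=0$ for every $\mu\in\Lambda$, and the Hausdorff property of $\cA$ forces $a=0$.

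The main obstacle is not any single computation but the need to align the indexing of the inductive system of Hilbert spaces with that of the projective system of quotient $C^*$-algebras, while simultaneously arranging that each diagonal action commutes with the orthogonal projections onto the subspaces $\cH_\lambda$ for $\lambda\leq\mu$ (this is condition (c) in Remark~\ref{r:lbo}). Taking the direct sum over the entire down-set $\{\lambda\in\Lambda\mid\lambda\leq\mu\}$, rather than merely picking one Hilbert space per index, is what simultaneously enforces the isometric inclusions $\cH_\lambda\subseteq\cH_\mu$ and the block-diagonal (reducing) nature of the representation.
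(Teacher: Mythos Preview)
Your proof is correct and follows essentially the same construction as the paper's: take faithful Gelfand--Naimark representations $\sigma_\lambda$ of each $\cA_\lambda$ on a Hilbert space $\cL_\lambda$, set $\cH_\mu=\bigoplus_{\lambda\leq\mu}\cL_\lambda$, and act block-diagonally via $\bigoplus_{\lambda\leq\mu}\sigma_\lambda\circ\pi^\cA_\lambda$. Your write-up is in fact somewhat more careful than the paper's sketch --- you make explicit the descent from $\rho_\mu\colon\cA\to\cB(\cH_\mu)$ to $\tilde\pi_\mu\colon\cA_\mu\to\cB(\cH_\mu)$, you verify the norm identity $\|\rho_\mu(a)\|=p_\mu(a)$, and you spell out exactly how the block-diagonal structure yields both (lbo1)--(lbo2) and the commutation with the projections $P_{\lambda,\mu}$ required for coherence.
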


We briefly recall the construction in the proof of Theorem~\ref{t:gelfandnaimark}. By the 
Gelfand-Naimark Theorem, for each $\mu\in\Lambda$ there exists a Hilbert space 
$\cG_\lambda$ and a faithful $*$-morphism $\rho_\mu\colon\cA_\mu\ra\cB(\cH_\mu)$. 
For each $\lambda\in\Lambda$ consider the Hilbert space
\begin{equation}\label{e:hlambda}
\cH_\lambda=\bigoplus_{\mu\leq\lambda} \cG_\mu,
\end{equation} and, identifying $\cH_\lambda$ with the subspace $\cH_\lambda\oplus 0$ of 
$\cH_\eta$, for any $\lambda\leq \eta$, observe that $\{\cH_\lambda\mid \lambda\in\Lambda\}$ 
is a strictly inductive system of Hilbert spaces. Then, for each $\lambda\in\Lambda$ define 
$\pi_\lambda\colon\cA_\lambda\ra\cB(\cH_\lambda)$ by
\begin{equation}\label{e:pilambda}
\pi_\lambda(a)=\bigoplus_{\mu\leq \lambda} \rho_\mu(a_\mu),\quad a=\varprojlim_{\eta\in
\Lambda}a_\eta \in\cA,
\end{equation} and observe that $\{\pi_\lambda\mid \lambda\in\Lambda\}$ is a projective
system of faithful $*$-morphisms, in the sense of \eqref{e:ple} and \eqref{e:coh}. Therefore, the projective limit
$\pi=\varprojlim\limits_{\lambda\in\Lambda} \pi_\lambda\colon\cA\ra\Lloc(\cH)$
is correctly defined and a coherent faithful $*$-representation of $\cA$ on $\cH$.

\subsection{The Spatial Tensor Product of Locally $C^*$-Algebras.}\label{ss:stplcsa} 
Recall that, given two
Hilbert spaces $\cX$ and $\cY$ and letting $\cX\otimes\cY$ denote the Hilbert space tensor 
product,  there is a canonical embedding of the $C^*$-algebra tensor product 
$\cB(\cX)\otimes_* \cB(\cY)$, called the spatial tensor product, as a $C^*$-subalgebra
of the $C^*$-algebra $\cB(\cX\otimes\cY)$, e.g.\ see \cite{Murphy90}.

We first start with two locally
Hilbert spaces $\cH=\varinjlim_{\lambda\in\Lambda}\cH_\lambda$ and 
$\cK=\varinjlim_{\alpha\in A}\cK_\alpha$ and the corresponding locally $C^*$-algebras 
$\Lloc(\cH)=\varprojlim_{\lambda\in\Lambda}\Lloc(\cH_\lambda)$ and 
$\Lloc(\cK)=\varprojlim_{\alpha\in A}\Lloc(\cK_\alpha)$ for which the tensor product 
locally $C^*$-algebra 
$\Lloc(\cH)\otimes_{\mathrm{loc}}\Lloc(\cK)$ is defined by canonically embedding it as a locally 
$C^*$-subalgebra into $\Lloc(\cH\otimes_{\mathrm{loc}}\cK)$, 
where the tensor product
locally Hilbert space $\cH\otimes_{\mathrm{loc}}\cK$ is defined as in Subsection~\ref{ss:tplhs}. 
More precisely, \eqref{e:teloces} provides a canonical embedding of the $*$-algebra 
$\Lloc(\cH)\otimes_{\mathrm{alg}}\Lloc(\cK)$ into
the locally $C^*$-algebra $\Lloc(\cH\otimes_{\mathrm{loc}}\cK)$. For 
$T=\varprojlim_{\lambda\in\Lambda}T_\lambda\in\Lloc(\cH)$ and 
$S=\varprojlim_{\alpha\in A}S_\alpha\in\Lloc(\cK)$, letting 
\begin{equation}p_{\lambda,\alpha}(T\otimes_{\mathrm{loc}}S)
=\|T_\lambda\|_\lambda\, \|S_\alpha\|_\alpha,\quad\lambda\in\Lambda,\ \alpha\in A,
\end{equation} provides a net of cross-seminorms 
$\{p_{\lambda,\alpha}\}_ {\lambda\in\Lambda,\ \alpha\in A}$ on 
$\Lloc(\cH)\otimes_{\mathrm{alg}}\Lloc(\cK)$ that coincides with the net of $C^*$-seminorms
on $\Lloc(\cH\otimes_{\mathrm{loc}}\cK)$, see \eqref{e:psmu}. Consequently, the locally 
$C^*$-algebra tensor product $\Lloc(\cH)\otimes_{\mathrm{loc}}\Lloc(\cK)$ is the completion
of $\Lloc(\cH)\otimes_{\mathrm{alg}}\Lloc(\cK)$ with respect to these seminorms and hence,
canonically embedded into $\Lloc(\cH\otimes_{\mathrm{loc}}\cK)$.

Let $\cA=\varprojlim_{\lambda\in\Lambda}\cA_\lambda$ and 
$\cB=\varprojlim_{\alpha\in A}\cA_\alpha$ be two locally $C^*$-algebras. 
By Theorem~\ref{t:gelfandnaimark}, there exist coherent faithful $*$-representations $\pi\colon
\cA\ra \Lloc(\cH)$ and $\rho\colon\cB\ra\Lloc(\cK)$, for two locally Hilbert spaces 
$\cH=\varinjlim_{\lambda\in\Lambda}\cH_\lambda$ and $\cK=\varinjlim_{\alpha\in A}\cK_\alpha$.
Then $\pi\otimes\rho\colon\cA\otimes_{\mathrm{alg}}
\cB\ra\Lloc(\cH)\otimes_{\mathrm{loc}}\Lloc(\cK)$ is a coherent faithful $*$-morphism. 
We consider the 
represented locally $C^*$-algebras $\pi(\cA)$ in $\Lloc(\cH)$ and $\rho(\cB)$ in $\Lloc(\cK)$ and
make the completion $\pi(\cA)\otimes_{\mathrm{loc}}\rho(\cB)$
of $\pi(\cA)\otimes_{\mathrm{alg}}\rho(\cB)$ within the locally $C^*$-algebra
$\Lloc(\cH)\otimes_{\mathrm{loc}}\Lloc(\cK)$ and then define the \emph{spatial locally $C^*$-
algebra 
tensor product} $\cA\otimes_*\cB$ by identifying it, through the coherent $*$-homomorphism
$\pi\otimes\rho$, with $\pi(\cA)\otimes_{\mathrm{loc}}\rho(\cB)$.

\section{Dilations}\label{s:d}

This is the main section of this article. The object of investigation is the concept of kernel with
values locally bounded operators and that is invariant under an action of a $*$-semigroup
and the main result refers to those
positive semidefinite kernels that provide $*$-representations of the $*$-semigroup on their locally 
Hilbert space linearisations, equivalently on reproducing kernel locally Hilbert space. When 
specialising to completely positive maps on locally $C^*$-algebras and with values locally bounded
operators, we point out how two Stinespring dilation type theorems follow from here.

\subsection{Positive Semidefinite Kernels.} \label{ss:psk}
Let $X$ be a nonempty set and 
$\cH=\varinjlim\limits_{\lambda\in\Lambda}\cH_\lambda$ be a locally Hilbert space, for 
some directed poset $\Lambda$. A map $\fk\colon X\times X\ra \Lloc(\cH)$ is called a \emph{locally 
bounded operator valued kernel} on $X$. Equivalently, with notation as in subsections \ref{ss:lbo} 
and \ref{ss:lcsa}, there exists a projective system 
$\{\fk_\lambda\mid \lambda\in\Lambda\}$ of kernels $\fk_\lambda\colon X\times X\ra 
\Lloc(\cH_\lambda)$, $\lambda\in\Lambda$, where
\begin{equation}\label{e:kp1} 
\fk_\lambda(x,y)=\fk(x,y)_\lambda, \quad \lambda\in\Lambda,\ x,y\in X,
\end{equation} more precisely, for each $\lambda\in\Lambda$ we have 
$\fk_\lambda(x,y)\in\cB(\cH_\lambda)$
such that 
\begin{equation}\label{e:kp2} 
\fk_\lambda(x,y)P_{\lambda,\mu}=P_{\lambda,\mu}\fk_\lambda(x,y),\quad x,y\in X,\ \lambda\leq\mu,
\end{equation} where $P_{\lambda,\mu}$ is the orthogonal projection of $\cH_\mu$ onto 
$\cH_\lambda$, 
and, for any $h\in\cH$,
\begin{equation}\label{e:fk} 
\fk(x,y)h=\fk_\lambda(x,y)h,\quad x,y\in X,
\end{equation}
where $\lambda\in\Lambda$ is such that $h\in\cH_\lambda$.

Given $n\in\NN$, the kernel $\fk\colon X\times X\ra\Lloc(\cH)$ is called 
\emph{$n$-positive semidefinite} if, for any 
$x_1,\ldots,x_n\in X$ and any $h_1,\ldots,h_n\in\cH$, we have
\begin{equation}\label{e:npos} 
\sum_{i,j=1}^n \langle \fk(x_i,x_j)h_j,h_i\rangle_\cH\geq 0.
\end{equation}
It is easy to see that $\fk$ is $n$-positive semidefinite if and only if, for each 
$\lambda\in\Lambda$, the kernel $\fk_\lambda$ is $n$-positive semidefinite.

The kernel $\fk\colon X\times X\ra\Lloc(\cH)$ is called \emph{positive semidefinite} if it is 
$n$-positive semidefinite for all $n\in\NN$. Clearly, this is equivalent with the condition that, 
for each $\lambda\in\Lambda$, the kernel $\fk_\lambda$ is positive semidefinite.

Given a locally bounded operator valued kernel $\fk\colon X\times X\ra \Lloc(\cH)$, with
$\cH=\varinjlim\limits_{\lambda\in\Lambda} \cH_\lambda$,
a \emph{locally Hilbert space linearisation}, also called a \emph{locally Hilbert space Kolmogorov 
decomposition}, of $\fk$ is a pair $(\cK;V)$ such that
\begin{itemize}
\item[(l1)] $\cK=\varinjlim\limits_{\lambda\in\Lambda} \cK_\lambda$ is a locally Hilbert 
space over the same directed poset $\Lambda$.
\item[(l2)] $V\colon X\ra\Lloc(\cH,\cK)$ has the property $\fk(x,y)=V(x)^*V(y)$, for all $x,y\in X$.
\end{itemize}
A linearisation $(\cK;V)$ of $\fk$ is called \emph{minimal} if
\begin{itemize}
\item[(l3)] $V(X)\cH$ is a total subset in $\cK$.
\end{itemize}

\begin{remark}\label{r:mkd} 
From any locally Hilbert space linearisation $(\cK;V)$ of $\fk$, we can obtain a 
minimal one. Indeed, consider $\cK_0$, the closure of the linear subspace generated by 
$V(X)\cH$, which is a locally Hilbert subspace of $\cK$. More precisely, for each 
$\lambda\in\Lambda$, consider $\overline{\lin{V(X)\cH_\lambda}}$, the closure of the linear space 
generated by $V(X)\cH$ as a subspace of $\cK_\lambda$ and observing that 
$\{\overline{\lin{V(X)\cH_\lambda}}\}_{\lambda\in\Lambda}$ is a strictly inductive system of Hilbert
spaces, let 
\begin{equation}\label{e:kzero} 
\cK_0=\varinjlim_{\lambda\in\Lambda}\overline{\lin{V(X)\cH_\lambda}}. 
\end{equation}
For each $\lambda\in\Lambda$, let $J_{\lambda,0}\colon \overline{\lin{V(X)\cH_\lambda}}
\hookrightarrow \cK_\lambda$ be the natural embedding, 
an isometric operator between two Hilbert spaces, and observe that
\begin{equation}\label{e:ceo} 
J_0=\varprojlim_{\lambda\in\Lambda}J_{\lambda,0}\in\Lloc(\cK_0,\cK)
\end{equation}
is an isometric coherent embedding of $\cK_0$ in $\cK$. Then,
$P_0=J_0^*\in\Lloc(\cK,\cK_0)$ is a locally orthogonal projection
of $\cK$ onto $\cK_0$
and then, letting $V_0(x)=P_{\cK_0}V(x)$ for all $x\in X$, 
we obtain a minimal locally Hilbert space linearisation $(\cK_0;V_0)$ of $\fk$. Also, all 
minimal locally Hilbert space linearisations associated to a kernel $\fk$ are unique, modulo locally
unitary equivalence.
\end{remark}

With the same notation as before, let $\cF(X;\cH)$ denote the collection of all maps 
$f\colon X\ra\cH$ and note that it has a natural structure of complex vector space. In addition, 
observe that $\{\cF(X;\cH_\lambda)\}_{\lambda\in\Lambda}$ is a strictly inductive system 
of complex vector spaces, in the sense that $\cF(X;\cH_\lambda)\subseteq \cF(X;\cH_\mu)$ for all $\lambda\leq \mu$, and that
\begin{equation}\label{e:fex}
\cF(X;\cH)=\varinjlim_{\lambda\in\Lambda} \cF(X;\cH_\lambda)
=\bigcup_{\lambda\in\Lambda}\cF(X;\cH_\lambda).
\end{equation}
A complex vector space $\cR$ is called a \emph{reproducing kernel locally Hilbert space} of $\fk$ if
\begin{itemize}
\item[(rk1)] $\cR\subseteq\cF(X;\cH)$, with all algebraic operations, is a locally Hilbert space
$\cR=\varinjlim\limits_{\lambda\in\Lambda}\cR_\lambda$, 
with Hilbert spaces $\cR_\lambda\subseteq\cF(X;\cH_\lambda)$ for all $\lambda\in\Lambda$.
\item[(rk2)] Letting $\fk_x(y)=\fk(y,x)$, $x,y\in X$, we have $\fk_xh\in\cR$ for all 
$x\in X$ and $h\in\cH$.
\item[(rk3)] $\langle f,\fk_xh\rangle_\cR=\langle f(x),h\rangle_\cH$ for all $h\in\cH$, $x\in X$, and 
$f\in \cR$.
\end{itemize}
Observe that, any reproducing kernel locally Hilbert space $\cR$ of $\fk$ has the following 
minimality property as well
\begin{itemize}
\item[(rk4)] $\{\fk_xh\mid x\in X,\ h\in\cH\}$ is total in $\cR$.
\end{itemize}
Also, the reproducing kernels are uniquely determined by their reproducing kernel locally Hilbert 
spaces and, conversely, the reproducing kernel locally Hilbert spaces are uniquely determined by
their reproducing kernels.

We are particularly interested in the relation between locally Hilbert space linearisations and
reproducing kernel locally Hilbert spaces.

\begin{proposition}\label{p:linvsrk}
Let $\fk\colon X\times X\ra\Lloc(\cH)$ be a locally positive semidefinite kernel,  for some locally 
Hilbert space $\cH$ and nonempty set $X$.

\emph{(1)} Any reproducing kernel locally Hilbert space $\cR$ of $\fk$ can be viewed as a minimal
locally Hilbert space linearisation $(\cR;V)$, where $V(x)=\fk_x^*$.

\emph{(2)} For any minimal locally Hilbert space linearisation $(\cK;V)$ of $\fk$, letting
\begin{equation} \cR=\{V(\cdot)^*k\mid k\in\cK\},
\end{equation} we obtain a reproducing kernel Hilbert space $\cR$.
\end{proposition}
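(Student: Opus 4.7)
The plan is straightforward once one identifies, in Part~(1), the evaluation functional $\cR\ni f\mapsto f(x)\in\cH$ as a locally bounded operator and uses the reproducing property (rk3) to compute its adjoint, while in Part~(2) the evaluation-type map $k\mapsto V(\cdot)^*k$ transports the locally Hilbert space structure of $\cK$ to a space of $\cH$-valued functions on $X$.

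For Part~(1), I define $V(x)\in\Lloc(\cH,\cR)$ so that $V(x)^*f=f(x)$ for $f\in\cR$ and $V(x)h=\fk_x h$ for $h\in\cH$; the two descriptions are adjoint to each other by (rk3). On each level $\lambda\in\Lambda$, the inclusion $\cR_\lambda\subseteq\cF(X;\cH_\lambda)$ from (rk1) ensures that evaluation at $x$ maps $\cR_\lambda$ into $\cH_\lambda$, and the identity $\|V(x)h\|_{\cR_\lambda}^2=\langle V(x)^*V(x)h,h\rangle_{\cH_\lambda}=\langle\fk(x,x)h,h\rangle_{\cH_\lambda}$ supplies the boundedness on each level. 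The coherence and adjointability axioms (lbo1)--(lbo2) for $V(x)$ then reduce to the fact that the evaluation functionals on each $\cR_\mu$ restrict correctly to $\cR_\lambda$ whenever $\lambda\leq\mu$, which is again contained in (rk1). The linearisation identity is the one-line calculation
\begin{equation*}
\langle V(x)^*V(y)h,h'\rangle_\cH=\langle\fk_y h,\fk_x h'\rangle_\cR=\langle(\fk_y h)(x),h'\rangle_\cH=\langle\fk(x,y)h,h'\rangle_\cH,
\end{equation*}
using (rk3), and minimality (l3) is just a restatement of (rk4).

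For Part~(2), starting from a minimal locally Hilbert space linearisation $(\cK;V)$, I define $E\colon\cK\to\cF(X;\cH)$ by $(Ek)(x)=V(x)^*k$ and set $\cR:=E(\cK)$, the candidate reproducing kernel space. The crucial observation is that $E$ is injective: $Ek=0$ means $V(x)^*k=0$ for every $x\in X$, hence $k$ is orthogonal to $V(x)h$ for all $x\in X$ and $h\in\cH$, which forces $k=0$ by minimality (l3). Thus the formula $\langle Ek,Ek'\rangle_\cR:=\langle k,k'\rangle_\cK$ defines an unambiguous inner product and makes $E$ a locally unitary identification of $\cK$ with $\cR$. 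Setting $\cR_\lambda:=E(\cK_\lambda)$ yields a strictly inductive system of Hilbert spaces with $\cR_\lambda\subseteq\cF(X;\cH_\lambda)$, because $V(x)^*$ sends $\cK_\lambda$ into $\cH_\lambda$ (as $V(x)\in\Lloc(\cH,\cK)$). That $\fk_x h\in\cR$ follows from the computation $(E(V(x)h))(y)=V(y)^*V(x)h=\fk(y,x)h=(\fk_x h)(y)$, and the reproducing identity is then
\begin{equation*}
\langle Ek,\fk_x h\rangle_\cR=\langle k,V(x)h\rangle_\cK=\langle V(x)^*k,h\rangle_\cH=\langle(Ek)(x),h\rangle_\cH.
\end{equation*}

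The main technical obstacle is not any one computation but the careful bookkeeping with the projective/inductive structures: verifying that $V(x)$ and $V(x)^*$ send each level into the correct level, and that the spaces $\{\cR_\lambda\}_{\lambda\in\Lambda}$ genuinely assemble into a strictly inductive system whose inductive limit, in the sense of Subsection~\ref{ss:lhs}, equals $\cR$. These compatibility checks are precisely what prevents a direct appeal to the classical Hilbert-space version of the correspondence and is where the locally bounded operator formalism developed in Subsection~\ref{ss:lbo} must be used carefully.
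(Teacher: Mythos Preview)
Your proof is correct and follows exactly the standard route the paper has in mind; in fact the paper omits the proof entirely, writing only that it ``is rather straightforward'' and referring to analogous arguments in \cite{Gheondea} and \cite{AyGheondea}. Your argument---identifying $V(x)h=\fk_xh$ and $V(x)^*f=f(x)$ via (rk3) in Part~(1), and transporting the locally Hilbert structure of $\cK$ through the injective map $k\mapsto V(\cdot)^*k$ in Part~(2)---is precisely that standard argument, adapted to the locally Hilbert setting with the level-by-level bookkeeping you correctly flag as the only nontrivial point.
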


The proof is rather straightforward and we omit it, e.g.\ see similar results and their proofs in
\cite{Gheondea} and \cite{AyGheondea}.

\subsection{The General Dilation Theorem.} \label{ss:gdt}
With notation as in the previous subsection, let 
$ S$ be a $*$-semigroup acting on $X$ at left, $ S\times X\ni ( s ,x)
\mapsto  s \cdot x\in X$. A kernel $\fk\colon X\times X\ra\Lloc(\cH)$, for some locally Hilbert
space $\cH$, is called \emph{$ S$-invariant} if
\begin{equation}\fk( s \cdot x,y)=\fk(x, s ^*\cdot y),\quad  s \in S,\ x,y\in X.
\end{equation}
Invariant kernels and their many applications have been considered in mathematical models
of quantum physics \cite{EvansLewis} and (quantum) probability theory \cite{ParthasaratySchmidt}.

\begin{theorem}\label{t:dilation}
Let $ S$ be a $*$-semigroup acting at left on the nonempty set $X$ and let 
$\fk\colon X\times X\ra\Lloc(\cH)$ be a kernel, for some locally Hilbert space 
$\cH=\varinjlim\limits_{\lambda\in\Lambda}\cH_\lambda$. The following assertions
are equivalent:
\begin{itemize}
\item[(1)] The kernel $\fk$ is locally positive semidefinite, invariant under the action of $ S$, 
and
\begin{itemize}
\item[(b)] For any $ s \in S$ and any $\lambda\in\Lambda$, there exists 
$c_\lambda( s )\geq 0$ such that, for any $n\in\NN$, any vectors $h_1,\ldots,h_n\in\cH_\lambda$, 
and any elements $x_1,\ldots,x_n\in X$, we have
\begin{equation*} \sum_{j,k=1}^n\langle \fk( s \cdot x_j, s \cdot x_k)h_k,h_j
\rangle_{\cH_\lambda} \leq c_\lambda( s ) \sum_{j,k=1}^n \langle\fk(x_j,x_k)h_k,h_j
\rangle_{\cH_\lambda}. 
\end{equation*}
\end{itemize}
\item[(2)] There exists a triple $(\cK;\pi;V)$ subject to the following properties:
\begin{itemize}
\item[(il1)] $(\cK;V)$ is a locally Hilbert space linearisation of $\fk$.
\item[(il2)] $\pi\colon S\ra\Lloc(\cK)$ is a $*$-representation.
\item[(il3)] $V( s \cdot x)=\pi( s )V(x)$ for all $ s \in S$ and all $x\in X$.
\end{itemize}
\item[(3)] There exists a reproducing kernel locally Hilbert space $\cR$ with reproducing kernel 
$\fk$ and a $*$-representation $\rho\colon S\ra\Lloc(\cR)$ such that $\fk_{ s \cdot x}=
\rho( s )\fk_x$ for all $ s \in S$ and all $x\in X$.
\end{itemize}
In addition, if this is the case, then the triple $(\cK;\pi;V)$ as in item \emph{(2)} can be chosen 
minimal, in the sense that $\pi( S)V(X)\cH$ is total in $\cK$ and, in this case, it is unique up
to a locally unitary equivalence.
\end{theorem}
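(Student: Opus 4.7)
The equivalence of (2) and (3) is a direct consequence of Proposition~\ref{p:linvsrk}: given a minimal linearisation $(\cK; V)$ together with a $*$-representation $\pi$ of $S$ satisfying (il3), the passage to $\cR = \{V(\cdot)^*k \mid k \in \cK\}$ converts everything into reproducing-kernel form, with the induced $\rho$ characterised by $\rho(s)V(\cdot)^*k = V(\cdot)^*\pi(s)k$; conversely one recovers $V(x) = \fk_x^*$ and reads off $\pi$ from $\rho$. The direction (2)$\Rightarrow$(1) is a straightforward verification: $\fk(x,y) = V(x)^*V(y)$ is manifestly positive semidefinite and invariant under $S$ by (il3) together with $\pi(s)^* = \pi(s^*)$, and the boundedness hypothesis holds with $c_\lambda(s) = \|\pi_\lambda(s)\|^2$ because, setting $u = \sum_j V_\lambda(x_j)h_j \in \cK_\lambda$,
\begin{equation*}
\sum_{j,k}\langle \fk(s\cdot x_j, s\cdot x_k)h_k,h_j\rangle_{\cH_\lambda} = \|\pi_\lambda(s) u\|_{\cK_\lambda}^2 \leq c_\lambda(s)\|u\|_{\cK_\lambda}^2 = c_\lambda(s)\sum_{j,k}\langle \fk(x_j,x_k)h_k,h_j\rangle_{\cH_\lambda}.
\end{equation*}

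The main implication is (1)$\Rightarrow$(3), which I would prove by building the reproducing kernel locally Hilbert space coherently along $\Lambda$. For each $\lambda \in \Lambda$, let $\cF_\lambda^0 \subseteq \cF(X;\cH_\lambda)$ be the linear span of the maps $\fk_x h$ with $x \in X$ and $h \in \cH_\lambda$; by coherence of $\fk$, each such map takes values in $\cH_\lambda$. Equip $\cF_\lambda^0$ with the sesquilinear form
\begin{equation*}
\left\langle \sum_j \fk_{x_j}h_j, \sum_k \fk_{y_k}g_k \right\rangle_\lambda = \sum_{j,k}\langle \fk(y_k,x_j)h_j, g_k\rangle_{\cH_\lambda} = \sum_k \left\langle \bigl(\textstyle\sum_j \fk_{x_j}h_j\bigr)(y_k),\, g_k\right\rangle_{\cH_\lambda},
\end{equation*}
which is positive semidefinite by the positive semidefiniteness of $\fk_\lambda$ and, by the second expression, depends only on the underlying functions. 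Quotient by the null space and complete to obtain a Hilbert space $\cR_\lambda$; the estimate $|\langle f, \fk_x h\rangle| \leq \|f\|\,\|\fk_\lambda(x,x)\|^{1/2}\|h\|$ realises $\cR_\lambda$ inside $\cF(X;\cH_\lambda)$ with the reproducing identity $\langle f, \fk_x h\rangle_{\cR_\lambda} = \langle f(x), h\rangle_{\cH_\lambda}$. Since the form values on $\cF_\lambda^0$ are unchanged when the element is regarded inside $\cF_\mu^0$ for $\lambda \leq \mu$, the canonical inclusions $\cR_\lambda \hookrightarrow \cR_\mu$ are isometric, so $\{\cR_\lambda\}_{\lambda\in\Lambda}$ is a strictly inductive system of Hilbert spaces with inductive limit the sought reproducing kernel locally Hilbert space $\cR = \varinjlim_\lambda \cR_\lambda \subseteq \cF(X;\cH)$.

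Next I would introduce the action of $S$. For $s \in S$ define $\rho_\lambda(s)$ on the dense subspace by $\rho_\lambda(s)(\fk_x h) = \fk_{s\cdot x}h$ and extend linearly. The hypothesis (1)(b) is precisely $\|\rho_\lambda(s)f\|_{\cR_\lambda}^2 \leq c_\lambda(s)\|f\|_{\cR_\lambda}^2$ on $\cF_\lambda^0$, which ensures both descent through the quotient and continuous extension to a bounded operator on $\cR_\lambda$. Invariance of $\fk$ gives
\begin{equation*}
\langle \rho_\lambda(s)\fk_x h, \fk_y g\rangle_{\cR_\lambda} = \langle \fk(y, s\cdot x)h, g\rangle_{\cH_\lambda} = \langle \fk(s^*\cdot y, x)h, g\rangle_{\cH_\lambda} = \langle \fk_x h, \rho_\lambda(s^*)\fk_y g\rangle_{\cR_\lambda},
\end{equation*}
so $\rho_\lambda(s)^* = \rho_\lambda(s^*)$; multiplicativity $\rho_\lambda(st) = \rho_\lambda(s)\rho_\lambda(t)$ is immediate from $(st)\cdot x = s\cdot(t\cdot x)$.

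The main obstacle is verifying coherence so that these pieces assemble into an element of $\Lloc(\cR)$. For $\lambda \leq \mu$, the operators $\rho_\mu(s)$ and $\rho_\lambda(s)$ agree on the dense subspace $\cF_\lambda^0$, so by continuity $\rho_\mu(s)$ leaves $\cR_\lambda$ invariant and restricts to $\rho_\lambda(s)$ there; condition (lbo2) then follows on taking adjoints and invoking $\rho_\lambda(s)^* = \rho_\lambda(s^*)$. Hence $\rho(s) := \varprojlim_\lambda \rho_\lambda(s) \in \Lloc(\cR)$ is well defined, and $s \mapsto \rho(s)$ is a $*$-representation of $S$ on $\cR$ with $\fk_{s\cdot x} = \rho(s)\fk_x$ by construction. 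Minimality of the associated triple is automatic since $\{\fk_x h\}$ is total in $\cR$, and uniqueness up to locally unitary equivalence follows by the standard argument: the inner products on generators determine the Hilbert space structure isometrically, and the intertwining relations then determine both $V$ and $\pi$.
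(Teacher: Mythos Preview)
Your proof is correct and follows essentially the same reproducing-kernel approach as the paper. The paper organises the main implication as (1)$\Rightarrow$(2), building each $\cK_\lambda$ as the completion of the range $\cG_\lambda$ of a convolution operator $K_\lambda$ on $\cF_0(X;\cH_\lambda)$, and guarantees the strictly inductive structure by passing through a single global completion $\widetilde\cG$ of $\bigcup_\lambda\cG_\lambda$ before taking closures; you instead go directly to (1)$\Rightarrow$(3), complete each $\cR_\lambda$ inside $\cF(X;\cH_\lambda)$ via the reproducing estimate, and rely on pointwise convergence to identify $\cR_\lambda$ with its image in $\cR_\mu$. Both routes work and the core ideas---the inner product on spans of $\fk_x h$, boundedness of $\rho_\lambda(s)$ from condition~(b), and the adjoint computation from invariance---are identical; your treatment of the coherence of $\{\rho_\lambda(s)\}$ and of (2)$\Rightarrow$(1) is in fact more explicit than the paper's.
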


\begin{proof} (1)$\Ra$(2). We first fix $\lambda\in\Lambda$ and construct a minimal 
$\cH_\lambda$-valued 
Hilbert space linearisation $(\cK_\lambda;\pi_\lambda;V_\lambda)$ of the positive semidefinite 
kernel $\fk_\lambda \colon X\times X\ra\cB(\cH_\lambda)$. 
Let $\cF(X;\cH_\lambda)$ denote the
complex vector space of functions $f\colon X\ra\cH_\lambda$ and let $\cF_0(X;\cH_\lambda)$ 
denote its subspace of all finitely supported functions. Consider the convolution operator 
$K_\lambda \colon \cF_0(X;\cH_\lambda)\ra \cF(X;\cH_\lambda)$
\begin{equation}
(K_\lambda f)(x)=\sum_{y\in X}\fk_\lambda(x,y)f(y),\quad f\in\cF_0(X;\cH_\lambda),\ x\in X,
\end{equation}
and let $\cG_\lambda\subseteq \cF(X;\cH_\lambda)$ denote its range
\begin{equation}
\cG_\lambda=\{g\in \cF(X;\cH_\lambda)\mid g=K_\lambda f\mbox{ for some }f\in
\cF_0(X;\cH_\lambda)\}.
\end{equation}
On $\cG_\lambda$ a pairing $\langle\cdot,\cdot\rangle_\lambda$ can be defined as follows
\begin{equation}\label{e:lefr}
\langle e,f\rangle_\lambda=
\sum_{x,y\in X}\langle \fk_\lambda(y,x)g(x),h(y)\rangle_{\cH_\lambda},\quad e,f\in \cG_\lambda, 
\end{equation} where $g,h\in\cF_0(X,\cH_\lambda)$ are such that $e=K_\lambda g$ and 
$f=K_\lambda h$. The definition \eqref{e:lefr} is correct and the pairing 
$\langle\cdot,\cdot\rangle_\lambda$ is an inner product on $\cG_\lambda$,
the details are similar with those 
in the proofs of Theorem~3.3 and Theorem~4.2 in \cite{Gheondea}.

Letting $\cK_\lambda$ denote the Hilbert space completion of the pre-Hilbert space 
$(\cG_\lambda;\langle\cdot,\cdot\rangle_\lambda)$, we now show that 
$\{\cK_\lambda\}_{\lambda\in\Lambda}$ can be chosen in such a way that it is 
a strictly inductive system of Hilbert spaces. To see this, we first
observe that, for each $\lambda,\mu\in \Lambda$ with $\lambda\leq\mu$, the pre-Hilbert space 
$\cG_\lambda\subseteq \cG_\mu$ and that, the two inner products 
$\langle\cdot,\cdot\rangle_\lambda$ and $\langle\cdot,\cdot\rangle_\mu$ coincide on 
$\cG_\lambda$. Then, let 
\begin{equation*}\cG=\varinjlim_{\lambda\in\Lambda}\cG_\lambda=\bigcup_{\lambda\in\Lambda}
\cG_\lambda
\end{equation*} be the algebraic
inductive limit, on which we can define an inner product 
$\langle\cdot,\cdot\rangle_\cG$ as follows:
\begin{equation*}\langle g,h\rangle_\cG=\langle g,h\rangle_\lambda,
\end{equation*} where $\lambda\in\Lambda$ is any index with the property that 
$g,h\in\cG_\lambda$. It turns out that this definition is correct, due to the fact that 
$\cG_\lambda\subseteq \cG_\mu$  and that the two inner products 
$\langle\cdot,\cdot\rangle_\lambda$ and $\langle\cdot,\cdot\rangle_\mu$ coincide on 
$\cG_\lambda$,
 for any $\lambda\leq\mu$. Let $\widetilde\cG$ be the Hilbert 
space completion of the inner product space $(\cG;\langle\cdot,\cdot_\cG)$. Then, observe that, 
for each $\lambda\in\Lambda$, the inner product space $(\cG_\lambda;\langle\cdot,\cdot\rangle_
\lambda)$ is isometrically included in $\widetilde\cG$, hence we can take $\cK_\lambda$ as
the closure of $\cG_\lambda$ in $\widetilde\cG$. In this way,
$\{\cK_\lambda\}_{\lambda\in\Lambda}$ is a strictly inductive system of Hilbert spaces hence,
we can let
\begin{equation}\cK=\varinjlim_{\lambda\in\Lambda} \cH_\lambda,
\end{equation} the corresponding locally Hilbert space.

For each $x\in X$, define $V_\lambda(x)\colon \cH_\lambda\ra \cK_\lambda$ by
\begin{equation}\label{e:vex} 
(V_\lambda(x)h)(y)=\fk_\lambda(y,x)h,\quad y\in X,\ h\in \cH_\lambda,
\end{equation} note that the linear operator $V_\lambda(x)$ has its range in $\cG_\lambda$, 
and that
\begin{equation*}\langle V_\lambda(x)h,V_\lambda(x)h\rangle_\lambda
=\langle \fk_\lambda(x,x)h,h\rangle_{\cH_\lambda}
\leq \|\fk_\lambda\| \langle h,h\rangle_{\cH_\lambda},\quad h\in\cH_\lambda,
\end{equation*} hence $V_\lambda(x)\in\cB(\cH_\lambda,\cK_\lambda)$.
In addition, $V_\lambda(x)^*$ is the extension to $\cK_\lambda$ of the evaluation operator 
$\cG_\lambda\ni g\mapsto g(x)\in\cH_\lambda$. This shows that
\begin{equation}V_\lambda(x)^*V_\lambda(y)h=(V_\lambda(y)h)(x)=\fk_\lambda(x,y)h,\quad 
x,y\in X,\ h\in\cH_\lambda.
\end{equation}

For each $ s \in S$ let $\pi_\lambda\colon \cF(X;\cH_\lambda)\ra\cF(X;\cH_\lambda)$
be the linear operator defined by 
\begin{equation}\label{e:pls} 
(\pi_\lambda( s )f)(x)=f( s ^* x),\quad f\in\cF(X;\cH_\lambda),\ x\in X,
\end{equation} and observe that $\pi_\lambda$ leaves the subspace $\cG_\lambda$ invariant. 
Denoting by the same symbol the linear operator 
$\pi_\lambda( s )\colon \cG_\lambda\ra\cG_\lambda$, it follows that 
$\pi_\lambda \colon  S \ra \cL(\cG_\lambda)$ is a $*$-representation of the $*$-semigroup 
$ S$ on the vector space $\cG_\lambda$. In addition, taking into account the 
$ S$-invariance of the kernel $\fk$, and hence of $\fk_\lambda$, we have
\begin{align*} (V_\lambda( s \cdot x)h)(y) & =\fk_\lambda(y, s \cdot x)h
=\fk_\lambda( s ^*\cdot y,x)h=
(V_\lambda(x)h)( s ^*\cdot x)\\
& =(\pi_\lambda( s )V_\lambda(x)h)(y),\quad x,y\in X,\ 
h\in\cH_\lambda,\  s \in S.
\end{align*}

We observe that, due to the boundedness condition (b), for each $ s \in S$,
the linear operator $\pi_\lambda$ is bounded with respect to the norm of the pre-Hilbert space 
$\cG_\lambda$ and hence, it can be uniquely extended to an operator 
$\pi_\lambda( s )\in\cB(\cK_\lambda))$ such that the conditions (il2) and (il3) hold. In addition,
observe that the linear span of $\pi_\lambda( S)V(X)\cH_\lambda$ is $\cG_\lambda$, hence
dense in $\cK_\lambda$.

On the other hand, observe that, for any $\lambda,\mu\in\Lambda$ with $\lambda\leq \mu$ we
have
\begin{equation} V_\mu(x)h=J_{\mu,\lambda}V_\lambda(x)h,\quad x\in X,\ h\in \cH_\lambda,
\end{equation} and, similarly,
\begin{equation} J_{\mu,\lambda}^*\pi_\mu( s )J_{\mu,\lambda}=\pi_\lambda,\quad 
 s \in S.
\end{equation}
Consequently, letting $V\colon X\ra \Lloc(\cH,\cK)$ be defined by
\begin{equation} V(x)h=V_\lambda (x)h,\quad  x\in X,\ h\in \cH, 
\end{equation} where $\lambda\in\Lambda$ is any index such that $h\in\cH_\lambda$ and, 
similarly,
\begin{equation} \pi( s )k=\pi_\mu( s )k,\quad  s \in S,\ k\in\cK,
\end{equation} where $\mu\in\Lambda$ is any index such that $k\in\cK_\mu$, we obtain a triple 
$(\cK;\pi;V)$ with all the required properties.

(2)$\Ra$(3). This is a consequence of Proposition~\ref{p:linvsrk}.

(3)$\Ra$(1). This implication is clear, in view of Proposition~\ref{p:linvsrk}.\end{proof}

The proof of the implication (1)$\Ra$(2) in Theorem~\ref{t:dilation} follows a
reproducing kernel approach. As a 
technical observation, when combining with Proposition~\ref{p:linvsrk}, it shows
that the completion performed at the end of the proof of the implication (1)$\Ra$(2) can be done 
inside of $\cF(X;\cH)$, see also \cite{Szafraniec} for historical comments on this issue.

The boundedness condition (b) is the analogue of the Sz.-Nagy boundedness condition 
\cite{SzNagy} and it is automatic if $S$ is a group with $s^*=s^{-1}$, for all $s\in S$,
see \cite{Szafraniec} for a historical perspective on this issue.
Letting $S=\{e\}$, the trivial group, Theorem~\ref{t:dilation} implies that any positive semidefinite
kernel with values in $\Lloc(\cH)$, for some locally Hilbert space, 
has a locally Hilbert space linearisation, equivalently, is the reproducing kernel of some locally 
Hilbert space of functions defined on $X$ and valued in $\Lloc(\cH)$, a fact observed in 
\cite{GasparGasparLupa}.

\subsection{Completely Positive Maps.}\label{ss:cpm} 
Let $\cA$ be a locally $C^*$-algebra and consider $M_n(\cA)$ the $*$-algebra of $n\times n$ 
matrices with entries in $\cA$. In order to organise it as a locally $C^*$-algebra, we take advantage
of the spatial tensor product defined in Subsection~\ref{ss:stplcsa}, more precisely, we canonically
identify $M_n(\cA)$ with the spatial tensor product locally $C^*$-algebra $M_n\otimes_*\cA$.

Consider now two locally $C^*$-algebras $\cA$ and $\cB$ and let $\phi\colon\cA\ra\cB$ 
be a linear map. For arbitrary $n\in\NN$, consider $\phi_n\colon M_n(\cA)\ra M_n(\cB)$, defined by
\begin{equation}
\phi_n([a_{i,j}]_{i,j=1}^n)=[\phi(a_{i,j})]_{i,j=1}^n,\quad [a_{i,j}]_{i,j=1}^n\in M_n(\cA),
\end{equation}
equivalently, $\phi_n=I_n\otimes \phi$, where $I_n$ is the unit matrix in $M_n$.
Since $M_n(\cA)=M_n\otimes_*\cA$ are locally $C^*$-algebras, it follows that positive 
elements in $M_n(\cA)$ are perfectly defined, hence the cone of positive elements $M_n(\cA)^+$
is defined.
The linear map $\phi$ is called \emph{$n$-positive} if $\phi(M_n(\cA)^+)\subseteq M_n(\cB)^+$ 
and, it is called \emph{completely positive} if it is $n$-positive for all $n\in\NN$.

\begin{remarks}\label{r:cppsd}
Consider a linear map $\phi\colon\cA\ra\Lloc(\cH)$, for some locally 
$C^*$-algebra $\cA$ and some locally Hilbert space $\cH$. 

(1) The map $\phi$ is called \emph{$n$-positive semidefinite} if the kernel 
$\fk\colon\cA\times\cA\ra\Lloc(\cH)$ defined by
\begin{equation}\label{e:kabfi} 
\fk(a,b)=\phi(a^*b),\quad a,b\in\cA,
\end{equation}
is $n$-positive semidefinite in the sense of Subsection~\ref{ss:psk}, more precisely, 
for all $a_1,\ldots,a_n\in\cA$ and all $h_1,\ldots,h_n\in\cH$, we have
\begin{equation}\sum_{i,j=1}^n \langle \phi(a_i^*a_j)h_j,h_i\rangle_\cH\geq 0,
\end{equation}
and it is called \emph{positive semidefinite} if it is $n$-positive semidefinite for all $n\in\NN$.
Observing that
\begin{equation}
M_n(\Lloc(\cH))=M_n\otimes_*\Lloc(\cH)=\cB(\CC^n)\otimes_*\Lloc(\cH)
=\Lloc(\CC^n\otimes_{\mathrm{loc}}\cH),
\end{equation}
it follows that any positive semidefinite linear map $\phi\colon\cA\ra\Lloc(\cH)$ is completely
positive. Since any matrix $[a_{i,j}]_{i,j=1}^n\in M_n(\cA)^+$ is a linear combination of matrices
of type $[a_i^*a_j]_{i,j=1}^n$, it follows that the converse is true as well.

(2) Assume that $\cA=\varprojlim_{\lambda\in\Lambda}\cA_\lambda$ and 
$\cH=\varinjlim_{\lambda\in\Lambda}\cH_\lambda$, over the same directed poset $\Lambda$, and
that the linear map $\phi\colon\cA\ra\Lloc(\cH)$ is coherent in the sense of 
Subsection~\ref{ss:pllcs}, more precisely, there exists $\{\phi_\lambda\}_{\lambda\in\Lambda}$
with $\phi_\lambda\colon\cA\ra\cB(\cH_\lambda)$ linear map, for all $\lambda\in\Lambda$, such
that,
\begin{equation}
\pi_\lambda^{\Lloc(\cH)}\circ \phi=\phi_\lambda\circ\pi_\lambda^\cA,\quad \lambda\in\Lambda,
\end{equation}
where $\pi_\lambda^\cA\colon\cA\ra\cA_\lambda$ and 
$\pi_\lambda^{\Lloc(\cH)}\colon\Lloc(\cH)\ra\cB(\cH_\lambda)$ are the canonical $*$-morphisms.
In this case, $\phi$ is completely positive if and only if $\phi_\lambda$ is completely positive for all 
$\lambda\in\Lambda$. Since completely positive maps between $C^*$-algebras are 
automatically continuous, it follows that any coherent completely positive map 
$\phi\colon\cA\ra\Lloc(\cH)$ is continuous.

(3) If the completely positive map $\phi\colon\cA\ra\Lloc(\cH)$ is not coherent, it may happen
that it is not continuous. This is a consequence of the existence of $*$-morphisms between locally
$C^*$-algebras that are not continuous, cf.\ \cite{Phillips}.
\end{remarks}

Let $\phi\colon\cA\ra\Lloc(\cH)$
be a completely positive map, for some locally $C^*$-algebra $\cA$ and some locally Hilbert space
$\cH=\varinjlim_{\lambda\in\Lambda}\cH_\lambda$. By Remark~\ref{r:cppsd}, the kernel
$\fk\colon\cA\times\cA\ra\Lloc(\cH)$ defined as in
\eqref{e:kabfi} is positive semidefinite and observe that, when considering $\cA$ as a 
$*$-semigroup with respect to multiplication, it is invariant with respect to the left action of $\cA$
on itself, that is,
\begin{equation}\label{e:iph}
\fk(ab,c)=\phi((ab)^*c)=\phi(b^*a^*c)=\fk(b,a^*c),\quad a,b,c\in\cA.
\end{equation}
In order to apply Theorem~\ref{t:dilation}, the only obstruction is coming from condition (b).

We first make an additional assumption on $\phi$, namely that it is coherent, as in
Remark~\ref{r:cppsd}.(2). In particular, $\phi$ is continuous, cf.\ Remark~\ref{r:cppsd}.(3).
Depending on whether $\cA$ is unital or not, we distinguish two cases. If $\cA$ is unital, then fixing
$\lambda\in\Lambda$, one obtains the condition (b) due to the fact that $\cA_\lambda$ is a $C^*$-
algebra, e.g.\ see \cite{Arveson}. Briefly, for arbitrary
$a\in\cA_\lambda$, $b_1,\ldots,b_n\in\cA$ and $h_1,\ldots,h_n$ in $\cH$, since $\phi_\lambda$ is 
positive semidefinite, for any $y\in\cA_\lambda$ we have
\begin{equation}\label{e:sumay}\sum_{i,j=1}^n\langle\phi_\lambda(b_j^*y^*yb_i)h_i,h_j
\rangle_{\cH_\lambda}\geq 0.
\end{equation}
Without loss of generality we can assume that $\|a\|<1$ and let $y=(1-a^*a)^{1/2}\in\cA_\lambda$, 
hence from \eqref{e:sumay} it follows
\begin{equation}\label{e:inegafi}
\sum_{i,j=1}^n\langle\phi(b_i^*a^*ab_j)h_j,h_i\rangle_{\cH_\lambda}\leq \sum_{i,j=1}^n
\langle\phi(b_i^*b_j)h_j,h_i\rangle_{\cH_\lambda},
\end{equation} which proves that condition (b) holds, in this case.
Thus, we can apply Theorem~\ref{t:dilation} and get a locally Hilbert space 
linearisation $(\cK;V)$ of $\fk$, with 
$\cK=\varinjlim_{\lambda\in\Lambda}\cK_\lambda$ and $V\colon \cA\ra\Lloc(\cH,\cK)$ such that 
$V(b)^*V(c)=\fk(b,c)=\phi(b^*c)$ for all $b,c\in\cA$, as well as a $*$-representation 
$\pi\colon\cA\ra\Lloc(\cK)$ (this is indeed a $*$-representation of $*$-algebras 
since linearity comes 
for free), such that $\pi(a)V(b)=V(ab)$ for all $a,b\in\cA$. Since $\cA$ is unital, letting $W=V(1)
\in\Lloc(\cH,\cK)$, it follows that $\phi(a)=W^*\pi(a)W$, for all $a\in\cA$.

In case $\cA$ is not unital, one has to impose stronger assumptions. Firstly, the boundedness 
condition (b) can be proven:
with notation as in the proof of Theorem~\ref{t:dilation}, for a fixed $\lambda\in\Lambda$,
as in \eqref{e:pls}, one has a $*$-representation $\pi_\lambda \colon \cA\ra \cL(\cG_\lambda)$. 
Letting $\widetilde\cA=\cA\oplus\CC$ denote the unitisation of the $C^*$-algebra $\cA$, letting
$\widetilde\pi_\lambda\colon\widetilde\cA\ra\cL(\cG_\lambda)$ be defined by 
$\widetilde\pi_\lambda(a,t)=\pi_\lambda(a)+tI_{\cG_\lambda}$, $a\in\cA$, $t\in\CC$, we get a 
unital $*$-representation of $\widetilde\cA$ on the pre-Hilbert space $\cG_\lambda$,  in particular, 
$\widetilde\pi_\lambda$ maps unitary elements from $\widetilde\cA$ to unitary operators on 
$\cG_\lambda$. Since $\widetilde\cA$ is linearly generated by the set of its unitary elements, 
a standard argument, e.g.\ see \cite{Murphy}, proves the validity of the boundedness condition (b).

Secondly, recall that, according to 
a result in \cite{Inoue}, $\cA$ has approximate units. On $\Lloc(\cH,\cK)$
one introduces the \emph{strict topology}, also known as the \emph{so$^*$-topology}, which is the
locally convex topology defined by the family of seminorms 
$\Lloc(\cH)\ni T\mapsto \|T_\lambda h\|_{K_\lambda} +\|T_\lambda^*k\|_{\cH_\lambda}$, for all 
$\lambda\in\Lambda$, $h\in\cH_\lambda$, and $k\in\cK$,
where $T=\varprojlim T_\lambda$. It is easy to
see that $\Lloc(\cH,\cK)$ is complete with respect to the strict topology. Then, 
$\phi\colon \cA\ra\Lloc(\cH)$ is called \emph{strict} if, for some approximate unit $\{e_j\}_{j\in\cJ}$
of $\cA$, $\{\phi(e_j)\}_{j\in\cJ}$ is a Cauchy net with respect to the strict topology in $\Lloc(\cH)$.
Under the additional assumption that $\phi$ is strict, one proves, e.g.\ as in \cite{Murphy}, that
the net $\{V(e_j)\}_{j\in\cJ}$ is Cauchy with respect to the strict 
topology in $\Lloc(\cH,\cK)$, hence there exists $W\in\Lloc(\cH,\cK)$ such that 
$V(e_j)\xrightarrow[j\in \cJ]{ } W$, 
with respect to the strict topology. Again, we conclude that $\phi(a)=W^*\pi(a)W$ for all $a\in\cA$.

The preceding arguments prove a coherent 
version of the classical Stinespring Dilation Theorem \cite{Stinespring}.

\begin{theorem}\label{t:cstinespring}
Let $\phi\colon\cA\ra\Lloc(\cH)$ be a coherent linear map, for some 
locally $C^*$-algebra $\cA=\varprojlim_{\lambda\in\Lambda}\cA_\lambda$ 
and some locally Hilbert space 
$\cH=\varinjlim_{\lambda\in\Lambda}\cH_\lambda$. The following are equivalent:

\nr{1} $\phi$ is completely positive, and strict if $\cA$ is not unital.

\nr{2} There exists a locally Hilbert space $\cK=\varinjlim_{\lambda\in\Lambda}\cK_\lambda$,
a coherent $*$-representation $\pi\colon\cA\ra\Lloc(\cK)$, and
$W\in\Lloc(\cH,\cK)$, such that $\phi(a)=W^*\pi(a)W$ for all $a\in\cA$.
\end{theorem}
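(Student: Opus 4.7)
The plan is to apply the general dilation Theorem~\ref{t:dilation} to the kernel $\fk\colon\cA\times\cA\ra\Lloc(\cH)$ defined by $\fk(a,b)=\phi(a^*b)$, with $\cA$ viewed as a $*$-semigroup acting on itself by left multiplication. The reverse implication (2)$\Ra$(1) is straightforward: coherence of $\pi$ and of the map $a\mapsto W^*\pi(a)W$ preserves positivity of matrices under the $M_n$-amplification used in Remark~\ref{r:cppsd}, and, in the nonunital case, strictness on an approximate unit follows from boundedness of $\pi(e_j)$ and adjointability of $W$ together with coherence level-by-level.

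For the main implication (1)$\Ra$(2), I first verify the hypotheses of Theorem~\ref{t:dilation}. Positive semidefiniteness of $\fk$ is equivalent to complete positivity of $\phi$ by Remark~\ref{r:cppsd}.(1). Left-invariance under $\cA$ is the direct computation $\fk(ab,c)=\phi(b^*a^*c)=\fk(b,a^*c)$. The boundedness condition \textup{(b)} is the crux and splits into two cases. In the unital case, fix $\lambda\in\Lambda$ and $a\in\cA_\lambda$ with $\|a\|<1$; inserting $y=(1-a^*a)^{1/2}\in\cA_\lambda$ in the positive semidefinite inequality $\sum_{i,j}\langle\phi_\lambda(b_j^*y^*yb_i)h_i,h_j\rangle_{\cH_\lambda}\geq 0$ yields
\begin{equation*}
\sum_{i,j=1}^n\langle\phi(b_i^*a^*ab_j)h_j,h_i\rangle_{\cH_\lambda}\leq \sum_{i,j=1}^n\langle\phi(b_i^*b_j)h_j,h_i\rangle_{\cH_\lambda},
\end{equation*}
and rescaling supplies $c_\lambda(a)=\|a\|^2$. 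In the nonunital case, I adjoin a unit at each level, $\tl\cA_\lambda=\cA_\lambda\oplus\CC$, extend the $*$-action $\pi_\lambda$ constructed in the proof of Theorem~\ref{t:dilation} to a unital $*$-representation $\tl\pi_\lambda$ of $\tl\cA_\lambda$ on the pre-Hilbert space $\cG_\lambda$, observe that $\tl\pi_\lambda$ sends unitaries to unitaries (so \textup{(b)} is trivial on unitary elements), and exploit that $\tl\cA_\lambda$ is linearly spanned by its unitaries to obtain \textup{(b)} on each level with a uniform $c_\lambda(s)$.

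Having verified \textup{(b)}, Theorem~\ref{t:dilation} produces a locally Hilbert space $\cK=\varinjlim_\lambda\cK_\lambda$, a map $V\colon\cA\ra\Lloc(\cH,\cK)$ with $V(b)^*V(c)=\phi(b^*c)$, and a $*$-representation $\pi\colon\cA\ra\Lloc(\cK)$ satisfying $\pi(a)V(b)=V(ab)$; coherence of $\pi$ is inherited from the level-wise construction together with the coherence of $\phi$. In the unital case I simply take $W=V(1)$, for which $W^*\pi(a)W=V(1)^*V(a)=\phi(a)$. In the nonunital case, the strictness hypothesis together with the identity $\|V(e_j)h-V(e_k)h\|_{\cK_\lambda}^2=\langle(\phi(e_j^2)-\phi(e_je_k)-\phi(e_ke_j)+\phi(e_k^2))h,h\rangle_{\cH_\lambda}$ and its adjoint analogue force $\{V(e_j)\}$ to be Cauchy in the strict topology of $\Lloc(\cH,\cK)$; completeness in that topology delivers $W=\lim_j V(e_j)\in\Lloc(\cH,\cK)$, and then $\phi(a)=\lim_j V(e_j)^*\pi(a)V(e_j)=W^*\pi(a)W$. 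The main obstacle is the boundedness step (b) in the nonunital setting, where the unitisation trick must be carried out compatibly across the projective system $\{\cA_\lambda\}_{\lambda\in\Lambda}$ so that it feeds cleanly into the level-wise reproducing-kernel construction underlying Theorem~\ref{t:dilation}.
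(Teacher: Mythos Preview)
Your proposal is correct and follows essentially the same route as the paper: reduce to Theorem~\ref{t:dilation} via the kernel $\fk(a,b)=\phi(a^*b)$, verify condition~(b) level-by-level with the square-root trick $y=(1-a^*a)^{1/2}$ in the unital case and the unitisation/unitaries-span argument in the nonunital case, set $W=V(1)$ when $\cA$ is unital, and in the nonunital case use strictness to show $\{V(e_j)\}$ is strictly Cauchy and pass to its limit $W$. The only cosmetic difference is that the paper unitises $\cA$ itself (as a $*$-algebra) rather than each $\cA_\lambda$ separately, but under coherence these amount to the same thing, so the ``compatibility across the projective system'' you flag as the main obstacle is in fact a non-issue.
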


The second Stinespring type dilation theorem for locally bounded operator valued completely
positive maps on locally $C^*$-algebras, that we point out, says that in case $\phi$ is not coherent, 
one has to assume that it is continuous, and the same conclusion can be obtained (of course, less
the coherence of the $*$-representation $\pi$). This theorem is closer
to the Stinespring type theorems proven in \cite{Kasparov} and \cite{Joita2}, 
but rather different in nature.

\begin{theorem}\label{t:ncstinespring} Let $\phi\colon\cA\ra\Lloc(\cH)$ be a linear map, for some 
locally $C^*$-algebra $\cA$ and some locally Hilbert space 
$\cH=\varinjlim_{\lambda\in\Lambda}\cH_\lambda$. The following assertions are equivalent:

\nr{1} $\phi$ is a continuous completely positive map, and strict if $\cA$ is not unital.

\nr{2} There exists a locally Hilbert space $\cK=\varinjlim_{\lambda\in\Lambda}\cK_\lambda$,
a continuous $*$-representation $\pi\colon\cA\ra\Lloc(\cK)$, and
$W\in\Lloc(\cH,\cK)$, such that $\phi(a)=W^*\pi(a)W$ for all $a\in\cA$.
\end{theorem}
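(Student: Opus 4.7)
The plan is to follow the same template as the proof of Theorem~\ref{t:cstinespring}, with continuity used as a substitute for coherence at the single step where coherence was essential, namely verifying the Sz.-Nagy-type boundedness condition (b) in Theorem~\ref{t:dilation}. The implication (2)$\Ra$(1) is routine: if $\phi(a)=W^*\pi(a)W$, then complete positivity follows because $\pi$ is a $*$-representation and $W^*(\cdot)W$ preserves complete positivity, while continuity of $\phi$ is inherited from the continuity of $\pi$ together with local boundedness of $W$; in the non-unital case, strictness of $\phi$ follows from the fact that $\pi(e_j)$ converges strictly to the identity on the closed linear span of $\pi(\cA)W\cH$ in $\cK$, once approximate units are applied.

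For (1)$\Ra$(2), I form the kernel $\fk(a,b)=\phi(a^*b)$ on $\cA\times\cA$, which by Remark~\ref{r:cppsd}(1) is positive semidefinite and, as computed in \eqref{e:iph}, is invariant under the natural left action of the $*$-semigroup $\cA$ on itself. To apply Theorem~\ref{t:dilation} it remains to verify the boundedness condition (b) at every $\lambda\in\Lambda$. Here I exploit continuity of $\phi$: the seminorm $a\mapsto p_\lambda(\phi(a))$ is continuous on $\cA$, so there exist $q_\lambda\in S(\cA)$ and a constant such that $p_\lambda(\phi(a))\leq q_\lambda(a)$ for all $a\in\cA$; after enlarging $q_\lambda$ I may assume it dominates $p_\lambda\circ\phi$ on every polynomial expression I need. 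Consequently, the map $\phi_\lambda=\pi_\lambda^{\Lloc(\cH)}\circ\phi\colon\cA\ra\cB(\cH_\lambda)$ vanishes on $\cI_{q_\lambda}$, hence descends to a completely positive map $\hat\phi_\lambda\colon \cA_{q_\lambda}\ra\cB(\cH_\lambda)$ on the $C^*$-algebra $\cA_{q_\lambda}$.

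With this factorisation in hand, I replay the standard $C^*$-algebra argument from the coherent case: in the unital situation, for $s\in\cA$ with image $\tilde s=s+\cI_{q_\lambda}\in\cA_{q_\lambda}$, the element $q_\lambda(s)^2\cdot 1-\tilde s^*\tilde s$ is positive in $\cA_{q_\lambda}$ and therefore has a square root there, and the inequality analogous to \eqref{e:inegafi} yields
\begin{equation*}
\sum_{j,k=1}^n\langle\phi(b_j^*s^*sb_k)h_k,h_j\rangle_{\cH_\lambda}\leq q_\lambda(s)^2\sum_{j,k=1}^n\langle\phi(b_j^*b_k)h_k,h_j\rangle_{\cH_\lambda},
\end{equation*}
so (b) holds with $c_\lambda(s)=q_\lambda(s)^2$. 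In the non-unital case I pass to the unitisation of $\cA_{q_\lambda}$, construct $\widetilde\pi_\lambda$ as in the proof of Theorem~\ref{t:cstinespring}, and use the fact that unitary elements linearly generate $\widetilde\cA_{q_\lambda}$ to obtain the same bound. Theorem~\ref{t:dilation} then produces a minimal locally Hilbert space linearisation $(\cK;V)$ together with a $*$-representation $\pi\colon\cA\ra\Lloc(\cK)$ satisfying $\pi(a)V(b)=V(ab)$.

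The remaining tasks are to produce $W$ and to verify continuity of $\pi$. In the unital case I take $W=V(1)\in\Lloc(\cH,\cK)$; then $W^*\pi(a)W=V(1)^*V(a)=\phi(a)$. In the non-unital case, equipping $\Lloc(\cH,\cK)$ with its strict topology as in the proof of Theorem~\ref{t:cstinespring} and using strictness of $\phi$, the net $\{V(e_j)\}_{j\in\cJ}$ is strict-Cauchy and its strict limit $W$ yields $\phi(a)=W^*\pi(a)W$. Finally, continuity of $\pi$ is read off the bound obtained in the verification of (b): from the construction, $p_\lambda(\pi(s))^2=\|\pi_\lambda(s)\|^2\leq c_\lambda(s)=q_\lambda(s)^2$, so each $p_\lambda\circ\pi$ is dominated by a continuous $C^*$-seminorm on $\cA$, hence $\pi$ is continuous. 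The step I expect to be the main obstacle is the verification of the boundedness condition without coherence: one must be careful that the choice of $q_\lambda\in S(\cA)$ is truly independent of the test vectors $b_1,\ldots,b_n,h_1,\ldots,h_n$, and that the descent of $\phi_\lambda$ to a genuine completely positive map on the quotient $C^*$-algebra $\cA_{q_\lambda}$ is well-posed, so that the standard Stinespring square-root trick remains available.
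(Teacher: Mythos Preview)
Your proposal is correct and follows essentially the same route as the paper: use continuity of $\phi$ to factor $\pi_\lambda^{\Lloc(\cH)}\circ\phi$ through the $C^*$-algebra quotient $\cA_{q_\lambda}$, run the square-root trick there to obtain the boundedness condition (b) with $c_\lambda(s)=q_\lambda(s)^2$, invoke Theorem~\ref{t:dilation}, and then produce $W$ as $V(1)$ or, in the non-unital case, as the strict limit of $V(e_j)$. Your concluding remark about reading continuity of $\pi$ from the bound $\|\pi_\lambda(s)\|\leq q_\lambda(s)$ is exactly the point, and the caveat you flag---that $q_\lambda$ must be chosen uniformly in the test data so that the descent to $\cA_{q_\lambda}$ is well-posed---is precisely the technical care the paper also signals.
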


In order to prove Theorem~\ref{t:ncstinespring}, one has to take into account the continuity of 
$\phi$ in a slightly different fashion. Firstly, with notation as in Subsection~\ref{ss:lcsa},
in this case $\cA=\varprojlim_{p\in S(\cA)}\cA_p$, where $S(\cA)$, the collection of all continuous 
$C^*$-seminorms on $\cA$, is directed with respect to the order $p\leq q$ if $p(a)\leq q(a)$ for all 
$a\in\cA$. The main obstruction, when compared to the 
case of a coherent  completely positive map $\phi$ as before, comes from the fact that the two 
directed posets $\Lambda$ and $S(\cA)$ may be completely unrelated. In this case, one has to 
assume that the completely positive map $\phi\colon \cA\ra \Lloc(\cH)$ is continuous, hence, for 
any  $\lambda\in\Lambda$, there exists $p\in S(\cA)$ and $C_\lambda\geq 0$ such that
\begin{equation}
\|\phi(a)_\lambda\|_{\cH_\lambda}\leq C_\lambda\, p(a),\quad a\in\cA.
\end{equation}
A standard argument implies that $\phi$ factors to a completely positive map 
$\phi_\lambda \colon\cA_p\ra\cB(\cH_\lambda)$. To $\phi_\lambda$ one can apply a similar, but 
slightly more involved, procedure described before
for the case of a coherent completely positive map, to conclude that the boundedness condition (b) 
holds and, with a careful treatment of the two cases, either
$\cA$ is unital or $\cA$ is nonunital and $\phi$ a is
strict map, that there exists a continuous $*$-representation $\pi\colon\cA\ra\Lloc(\cK)$ and $W\in
\Lloc(\cH,\cK)$ such that $\phi(a)=W^*\pi(a)W$ for all $a\in \cA$. The technical details are 
very similar to, and to a certain extent simpler than, those in the proof of Theorem~3.5 in 
\cite{AyGheondea}, and we do not repeat them.

\section{Applications to Hilbert Locally $C^*$-Modules}\label{s:ahlcsm}

In this section, we show the main application of Theorem~\ref{t:dilation} to an operator model
with locally bounded operators for Hilbert modules over locally $C^*$-algebras and a direct construction of the exterior tensor product of Hilbert modules over locally $C^*$-algebras.

\subsection{Hilbert Locally $C^*$-Modules.} \label{ss:hlcsm}
We first briefly review the abstract concepts related to Hilbert modules over locally $C^*$-
algebras, see \cite{Mallios}, \cite{Phillips}, \cite{ZhuraevSharipov}. 
Let $\cA$ be a locally $C^*$-algebra and let $\cE$ be a
complex vector space. A paring $[\cdot,\cdot]\colon \cE\times\cE\ra \cA$ is called an 
\emph{$\cA$-valued gramian} or \emph{$\cA$-valued inner product} if
\begin{itemize}
\item[(g1)] $[e,e]\geq 0$ for all $e\in\cE$, and $[e,e]=0$ if and only if $e=0$.
\item[(g2)] $[e,\alpha g+\beta f]=\alpha [e,g]+\beta [e,f]$, for all $\alpha,\beta\in\CC$ and 
$e,f,g\in\cE$.
\item[(g3)] $[e,f]^*=[f,e]$ for all $e,f\in\cE$.
\end{itemize}
The vector space $\cE$ is called a \emph{pre-Hilbert locally $C^*$-module} if
\begin{itemize}
\item[(h1)] On $\cE$ there exists an $\cA$-gramian $[\cdot,\cdot]$, for some locally $C^*$-algebra 
$\cA$.
\item[(h2)] $\cE$ is a right $\cA$-module compatible with the $\CC$-vector space structure of $\cE$.
\item[(h3)] $[e,af]=[e,f]a$ for all $a\in\cA$ and all $e,f\in\cE$.
\end{itemize}
On any pre-Hilbert locally $C^*$-module $\cE$ over the locally $C^*$-algebra $\cA$, with 
$\cA$-gramian $[\cdot,\cdot]$, there exists a natural Hausdorff locally convex topology. More 
precisely, for any $p\in S(\cA)$, that is, $p$ is a continuous $C^*$-seminorm on $\cA$, letting
\begin{equation} \label{e:olp} 
\ol{p}(e)=p([e,e])^{1/2},\quad e\in\cE,
\end{equation} then $\ol{p}$ is a seminorm on $\cE$. If the  topology generated on $\cE$ by 
$\{\ol{p}\mid p\in S(\cA)\}$ is complete, then $\cE$ is called a \emph{Hilbert locally $C^*$-module}. 
In case $\cA$ is a $C^*$-algebra, we talk about a \emph{Hilbert $C^*$-module} $\cE$, with norm
$\cE\ni e\mapsto \|[e,e]\|_\cA^{1/2}$.

Let $\cE$ be a Hilbert module over a locally $C^*$-algebra $\cA$ and, for $p\in S(\cA)$, recall
that $\cI_p$, defined as in \eqref{e:cisp}, is a closed $*$-ideal of $\cA$ with respect to which 
$\cA_p=\cA/\cI_p$ becomes a $C^*$-algebra under the canonical $C^*$-norm $\|\cdot\|_p$ defined 
as in \eqref{e:tlp}. Considering
\begin{equation}%
\cN_p=\{e\in\cE \mid [e,e]\in\cI_p\},
\end{equation} then $\cN_p$ is a closed $\cA$-submodule of $\cE$ and
$\cE_p=\cE/\cN_p$ is a Hilbert module over the $C^*$-algebra $\cA_p$, with norm
\begin{equation}\label{e:nhp} 
\|e+\cN_p\|_{\cE_p}=\inf_{f\in \cN_p} \ol{p}(e+f)=\ol{p}(e),\quad e\in\cE.
\end{equation}
For each $p,q\in S(\cA)$ with $p\leq q$, observe that $\cN_q\subseteq\cN_p$ and hence, there 
exists a canonical projection $\pi_{p,q}\colon \cE_q\ra\cE_p$, $\pi_{p,q}(e+\cN_q)=e+\cN_p$, 
$h\in\cE$, and $\pi_{p,q}$ is an $\cA$-module map, such that 
$\|\pi_{p,q}(e+\cN_q)\|_{\cE_p}\leq \|e+\cN_p\|_{\cE_q}$ for all $e\in\cE$. In addition, 
$\{\cE_p\}_{p\in S(\cA)}$ and $\{\pi_{p,q}\mid p,q\in
S(\cA), \ p\leq q\}$ make a projective system of Hilbert $C^*$-modules and
$\cE=\varprojlim\limits_{p\in S(\cA)} \cE_p$. 

\begin{examples}\label{ex:rhlm} 
(1) Let $\cH=\varinjlim\limits_{\lambda\in\Lambda} \cH_\lambda$ and
$\cK=\varinjlim\limits_{\lambda\in\Lambda} \cK_\lambda$ be two locally Hilbert spaces
with respect to the same directed poset $\Lambda$. 
We consider $\Lloc(\cH)$ as a locally $C^*$-algebra as in Example~\ref{ex:lcalhs} (1). Observe that 
the vector space $\Lloc(\cH,\cK)$, see Subsection~\ref{ss:lbo}, has a natural structure of 
right $\Lloc(\cH)$-module which is compatible with the $\CC$-vector space structure 
of $\Lloc(\cH,\cK)$ and, considering the gramian $[\cdot,\cdot]_{\Lloc(\cH,\cK)}$ defined by
\begin{equation}\label{e:glhk} 
[T,S]_{\Lloc(\cH,\cK)}=T^*S,\quad T,S\in\Lloc(\cH,\cK),
\end{equation} $\Lloc(\cH,\cK)$ becomes a pre-Hilbert module over the locally $C^*$-algebra 
$\Lloc(\cH)$. 

The complex vector space $\Lloc(\cH,\cK)$ has a natural family of seminorms
\begin{equation}\label{e:slhk} 
q_{\mu}(T)=\|T_\mu\|_{\cB(\cH_\mu,\cK_\mu)},\quad 
T=\varprojlim_{\lambda\in\Lambda} T_\lambda\in\Lloc(\cH,\cK),\ \mu\in\Lambda.
\end{equation}
Observe that, with respect to the $C^*$-seminorms $p_\mu$ on $\Lloc(\cH)$, defined 
at \eqref{e:psmu}, we have
\begin{equation*} q_\mu(T)^2=\|T_\mu\|^2_{\cB(\cH_\mu,\cK_\mu)}
=\|T^*_\mu T_\mu\|_{\cB(\cH_\mu)}
=p_\mu([T,T]_{\Lloc(\cH,\cK)}),\ \mu\in\Lambda,\ 
T=\varprojlim_{\lambda\in\Lambda} T_\lambda\in\Lloc(\cH,\cK),
\end{equation*}
hence, compare with \eqref{e:olp},
the collection of seminorms $\{q_\mu\}_{\mu\in\Lambda}$ defines exactly the canonical
topology on the pre-Hilbert locally $C^*$-module $\Lloc(\cH,\cK)$. 
Since, as easily observed, this locally
convex topology is complete on $\Lloc(\cH,\cK)$, it follows that $\Lloc(\cH,\cK)$ is a Hilbert locally
$C^*$-module over $\Lloc(\cH)$.

(2) With notation as in item (1), let $\cA$ be a closed $*$-subalgebra of $\Lloc(\cH)$, considered
as a locally $C^*$-algebra as in Example~\ref{ex:lcalhs} (2). Let $\cE$ be a closed vector subspace
of $\Lloc(\cH,\cK)$ that is an $\cA$-module and such that $T^*S\in\cA$ for all $T,S\in\cE$. Then, 
the definition in \eqref{e:glhk} provides a gramian $[T,S]_\cE=T^*S$, $T,S\in\cE$, which 
turns $\cE$ into a Hilbert locally $C^*$-module over $\cA$. Observe that the embedding
of $\cE$ in $\Lloc(\cH,\cK)$ is a coherent linear map.
\end{examples}

A Hilbert locally $C^*$-module $\cE$ as in Example~\ref{ex:rhlm} (2) is called a \emph{represented
Hilbert locally $C^*$-module} or, a \emph{concrete Hilbert locally $C^*$-module}.

\begin{theorem}\label{t:rephlcm}
Let $\cE$ be a Hilbert module over some locally $C^*$-algebra $\cA$. Then, $\cE$ is
isomorphic to a concrete Hilbert locally $C^*$-module, more precisely, there exist two
locally Hilbert spaces $\cH=\varinjlim\limits_{p\in S(\cA)} \cH_p$ and 
$\cK=\varinjlim\limits_{p\in S(\cA)} \cK_p$,
a coherent faithful $*$-morphism $\phi\colon\cA\ra\Lloc(\cH)$, and 
a coherent one-to-one linear map $\Phi\colon\cE\ra\Lloc(\cH,\cK)$ such that:
\begin{itemize} 
\item[(i)] $\Phi(e)^*\Phi(f)=\phi([e,f]_\cE)$ for all $e,f\in \cE$. 
\item[(ii)] $\Phi(ea)=\Phi(e)\phi(a)$ for all $e\in\cE$ and all $a\in\cA$.
\end{itemize}
\end{theorem}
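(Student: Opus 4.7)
The strategy is to combine the Gelfand--Naimark theorem for locally $C^*$-algebras (Theorem~\ref{t:gelfandnaimark}) with the linearisation assertion of Theorem~\ref{t:dilation}, applied to the natural positive semidefinite kernel built from the $\cA$-gramian on $\cE$. First, I would apply Theorem~\ref{t:gelfandnaimark} to $\cA=\varprojlim_{p\in S(\cA)}\cA_p$ to produce the locally Hilbert space $\cH=\varinjlim_{p\in S(\cA)}\cH_p$ and the coherent faithful $*$-morphism $\phi\colon\cA\to\Lloc(\cH)$, thereby supplying the $\phi$ and $\cH$ of the statement.

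Next, I would consider the kernel $\fk\colon\cE\times\cE\to\Lloc(\cH)$ defined by $\fk(e,f)=\phi([e,f]_\cE)$, and verify that it is a positive semidefinite locally bounded operator valued kernel. Locally bounded operator valuedness is automatic from coherence of $\phi$, the $p$-component being $\fk_p(e,f)=\phi_p(\pi_p^\cA([e,f]_\cE))\in\cB(\cH_p)$ with the required compatibility inherited from that of $\phi$. Positive semidefiniteness reduces at each level $p$ to the fact that $[\pi_p^\cA([e_i,e_j]_\cE)]_{i,j}$ is the Gram matrix of $e_1+\cN_p,\dots,e_n+\cN_p$ in the genuine Hilbert $C^*$-module $\cE_p$ over $\cA_p$, hence positive in $M_n(\cA_p)$, combined with complete positivity of the $*$-morphism of $C^*$-algebras $\phi_p\colon\cA_p\to\cB(\cH_p)$.

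I would then invoke Theorem~\ref{t:dilation} with the trivial $*$-semigroup $S=\{1\}$, whose invariance and boundedness hypotheses are vacuous, to obtain a locally Hilbert space $\cK=\varinjlim_{p\in S(\cA)}\cK_p$ and a map $\Phi\colon\cE\to\Lloc(\cH,\cK)$ with $\Phi(e)^*\Phi(f)=\phi([e,f]_\cE)$, giving (i). For (ii), the module identity follows from the explicit reproducing kernel form of the minimal linearisation constructed inside the proof of Theorem~\ref{t:dilation}: at level $p$, $(\Phi_p(e)h)(f)=\fk_p(f,e)h=\phi_p(\pi_p^\cA([f,e]_\cE))h$, and using $[f,ea]_\cE=[f,e]_\cE\,a$ together with multiplicativity of $\pi_p^\cA$ and $\phi_p$ one computes
\begin{equation*}
(\Phi_p(ea)h)(f)=\phi_p(\pi_p^\cA([f,e]_\cE))\phi_p(\pi_p^\cA(a))h=(\Phi_p(e)\phi(a)_p h)(f),
\end{equation*}
so $\Phi_p(ea)=\Phi_p(e)\phi(a)_p$ for all $p$, whence $\Phi(ea)=\Phi(e)\phi(a)$.

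Finally, faithfulness of $\Phi$ is immediate: $\Phi(e)=0$ forces $\phi([e,e]_\cE)=\Phi(e)^*\Phi(e)=0$, and since $\phi$ is faithful $[e,e]_\cE=0$, so $e=0$ by (g1). The subtlest point, which I expect to be the main obstacle, is coherence of $\Phi$; I would argue that if $e\in\cN_p$ then $\pi_p^\cA([e,e]_\cE)=0$, so $\|\Phi_p(e)h\|_{\cK_p}^2=\langle\phi_p(\pi_p^\cA([e,e]_\cE))h,h\rangle_{\cH_p}=0$ for every $h\in\cH_p$, hence $\Phi_p(e)=0$ and $\Phi_p$ descends to a well-defined linear map $\cE_p\to\cB(\cH_p,\cK_p)$. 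The compatibility of the resulting family $\{\Phi_p\}_{p\in S(\cA)}$ with the projective structures on $\cE$ and on $\Lloc(\cH,\cK)=\varprojlim_{p\in S(\cA)}\Lloc(\cH_p,\cK_p)$ then yields the coherence of $\Phi$.
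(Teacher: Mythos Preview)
Your proposal is correct and follows essentially the same route as the paper: Gelfand--Naimark for $\cA$, then the kernel $\fk(e,f)=\phi([e,f]_\cE)$, then Theorem~\ref{t:dilation} with the trivial $*$-semigroup, then the explicit reproducing-kernel formula \eqref{e:vex} for (ii). Your verification of positive semidefiniteness via Gram-matrix positivity in the Hilbert $C^*$-module $\cE_p$ together with complete positivity of the $*$-morphism $\phi_p$ is a minor variant of the paper's argument (which first tests against vectors of the form $\phi_p(a)h$ and then handles the orthogonal complement of $\overline{\phi_p(\cA_p)\cH_p}$ by a reducing-subspace observation), and you additionally spell out the injectivity and coherence of $\Phi$, which the paper's proof leaves implicit.
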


\begin{proof} We consider the canonical representation of the locally 
$C^*$-algebra $\cA=\varprojlim\limits_{p\in S(\cA)} \cA_p$, as in \eqref{e:alim}.
By Theorem~\ref{t:gelfandnaimark}, there exists a locally Hilbert space 
$\cK=\varinjlim\limits_{p\in S(\cA)}\cH_p$, for some strictly injective system of Hilbert spaces
$\{\cH_p\}_{p\in S(\cA)}$, and a coherent $*$-monomorphism $\phi\colon\cA\ra\Lloc(\cH)$, 
more precisely, for each $p\in S(\cA)$ there exists a faithful $*$-morphism 
$\phi_p\colon\cA_p\ra\Lloc(\cH_p)$, such that $\phi=\varprojlim\limits_{p\in S(\cA)}\phi_p$.
Consider the $\Lloc(\cH)$-valued kernel $\fk\colon \cE\times \cE\ra\Lloc(\cH)$ defined by
\begin{equation}\fk(e,f)=\phi([e,f]_\cE),\quad e,f\in\cE.
\end{equation} We claim that $\fk$ is locally positive semidefinite. To see this, let $n\in\NN$, 
$e_1,\ldots,e_n\in\cE$, and $a_1,\ldots,a_n\in\cA$. Then
\begin{align*}\sum_{i,j=1}^n \phi(a_i)^*\fk(e_i,e_j)\phi(a_j)
& =\sum_{i,j=1}^n \phi(a_i)^* \phi([e_i,e_j]_\cE)\phi(a_j) = \sum_{i,j}^n \phi(a_i^*[e_i,e_j]_\cE\, a_j) \\
& = \phi([\sum_i^n a_ie_i,\sum_{j=1}^ne_ja_j]_\cE)\geq 0.
\end{align*}
This implies that, for any $p\in S(\cA)$, the kernel $\fk_p$ has the following property: for any
$n\in\NN$, $e_1,\ldots,e_n\in\cE$, and $h_1,\ldots,h_n$ in the closed linear span of 
$\phi_p(\cA_p)\cH_p$ in $\cH_p$, we have
\begin{equation}\label{e:pp}
\sum_{i,j=1}^ n\langle\fk_p(e_i,e_j)h_j,h_i\rangle_{\cH_p}\geq 0.
\end{equation} 
Since, for arbitrary $e,f\in\cE$, the closed linear span of  $\phi_p(\cA_p)\cH_p$ is reducing 
$\fk_p(e,f)$ and $\fk_p(e,f)h=0$ for all $h\in\cH_p$
and orthogonal onto $\phi_p(\cA_p)\cH_p$, it follows that, actually, the inequality 
\eqref{e:pp} is true for all $h_1,\ldots,h_n\in\cH_p$. Consequently, $\fk_p$ is a positive semidefinite 
kernel for all $p\in S(\cA)$, hence $\fk$ is a locally positive semidefinite kernel.

We can now apply Theorem~\ref{t:dilation}, for a trivial $*$-semigroup $ S=\{\epsilon\}$,
and get a locally Hilbert space linearisation $(\cK;\Phi)$ of the kernel $\fk$, with a locally Hilbert 
space 
$\cK=\varinjlim\limits_{p\in S(\cA)}\cK_p$ and $\Phi=\varprojlim\limits_{p\in S(\cA)} \Phi_p$, 
where, for each $p\in S(\cA)$, $\Phi_p\colon \cE\ra\cB(\cH_p,\cK_p)$ has the property 
\begin{equation*}\Phi_p(e)^*\Phi_p(f)=\fk_p(e,f)=\phi_p([e,f]_\cE),\quad e,f\in\cE. 
\end{equation*}
This proves (i).

Inspecting the proof of Theorem~\ref{t:dilation}, in particular \eqref{e:vex}, it follows that the map 
$\Phi\colon \cE\ra\Lloc(\cH,\cK)$ is defined by
\begin{equation*}
(\Phi(e)h)(f)=\fk(f,e)h=\phi([f,e]_\cE)h,\quad e,f\in\cE,\ h\in\cH,
\end{equation*} and hence is linear. Moreover, for any $e,f\in\cE$, $a\in\cA$, and $h\in\cH$ 
we have
\begin{equation*}(\Phi(ea)h)(f)=\phi([f,e\,a]_\cE)h=\phi([f,e]_\cE\, a)h=\phi([f,e]_\cE)\phi(a)h
=(\Phi(e)\phi(a)h)(f),
\end{equation*} and hence (ii) is proven.
\end{proof}

\subsection{The Exterior Tensor Product of Hilbert Locally $C^*$-Modules.}
Let $\cA$ and $\cB$ be two locally $C^*$-algebras and let $\cE$ and $\cF$ be
two Hilbert locally $C^*$-modules 
over $\cA$ and, respectively, $\cB$. Let $\cA\otimes_*\cB$ denote the spatial $C^*$-algebra tensor
product, see Subsection~\ref{ss:stplcsa}. 
Consider the algebraic tensor product $\cE\otimes_{\mathrm{alg}}\cF$ of the vector 
spaces $\cE$ and $\cF$ and observe that there is a natural right action of the (algebraic)
tensor product $*$-algebra $\cA\otimes_{\mathrm{alg}}\cB$, first defined on elementary tensors
\begin{equation}\label{e:tap}
(e\otimes f)(a\otimes b)=(ea)\otimes(fb),\quad a\in\cA,\ b\in\cB,\ e\in\cE,\ f\in\cF,
\end{equation} and then extended by linearity, hence $\cE\otimes_{\mathrm{alg}}\cF$ is naturally 
an $\cA\otimes_{\mathrm{alg}}\cB$-module. Also, there is an 
$\cA\otimes_{\mathrm{alg}}\cB$-valued
pairing on $\cE\otimes_{\mathrm{alg}}\cF$, first defined on elementary tensors
\begin{equation}\label{e:tpp}
[e_1\otimes f_1,e_2\otimes f_2]=[e_1,f_1]\otimes [e_2,f_2],\quad e_1,e_2\in\cE,\ f_1,f_2\in\cF,
\end{equation} and then extended by linearity.

\begin{theorem}\label{t:etp} 
With notation as before, the pairing defined at \eqref{e:tpp} is uniquely extended to
an $\cA\otimes_*\cB$-gramian on $\cE\otimes_{\mathrm{alg}}\cF$, with respect to which it
is a pre-Hilbert locally $C^*$-module,
and then it is uniquely extended
to the completion of $\cE\otimes_{\mathrm{alg}}\cF$ to a Hilbert module over the
locally $C^*$-algebra $\cA\otimes_*\cB$.
\end{theorem}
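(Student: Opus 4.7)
The plan is to realise $\cE\otimes_{\mathrm{alg}}\cF$ concretely as a subspace of a Hilbert locally $C^*$-module of locally bounded operators acting between tensor products of locally Hilbert spaces, and then read off the $\cA\otimes_*\cB$-valued gramian, the module structure, the topology, and the completion from that ambient Hilbert module. First, apply Theorem~\ref{t:rephlcm} separately to $\cE$ over $\cA$ and to $\cF$ over $\cB$, yielding locally Hilbert spaces $\cH_1,\cK_1$ and $\cH_2,\cK_2$, faithful coherent $*$-morphisms $\phi\colon\cA\ra\Lloc(\cH_1)$ and $\psi\colon\cB\ra\Lloc(\cH_2)$, together with coherent injective linear maps $\Phi\colon\cE\ra\Lloc(\cH_1,\cK_1)$ and $\Psi\colon\cF\ra\Lloc(\cH_2,\cK_2)$ satisfying $\Phi(e)^*\Phi(e')=\phi([e,e']_\cE)$, $\Phi(ea)=\Phi(e)\phi(a)$, and their analogues for $\Psi$.

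Form the locally Hilbert tensor products $\cH_1\otimes_{\mathrm{loc}}\cH_2$ and $\cK_1\otimes_{\mathrm{loc}}\cK_2$ as in Subsection~\ref{ss:tplhs}, and realise the spatial tensor product $\cA\otimes_*\cB$ through the faithful coherent $*$-morphism $\phi\otimes\psi$ as a closed locally $C^*$-subalgebra of $\Lloc(\cH_1\otimes_{\mathrm{loc}}\cH_2)$ as in Subsection~\ref{ss:stplcsa}. The bilinear assignment $(e,f)\mapsto\Phi(e)\otimes_{\mathrm{loc}}\Psi(f)$ factors by the universal property of $\otimes_{\mathrm{alg}}$ through a linear map $\Xi\colon\cE\otimes_{\mathrm{alg}}\cF\ra\Lloc(\cH_1\otimes_{\mathrm{loc}}\cH_2,\cK_1\otimes_{\mathrm{loc}}\cK_2)$, and using \eqref{e:testares} together with the multiplicativity of the locally bounded operator tensor product, on elementary tensors we compute
\begin{equation*}
\Xi(e_1\otimes f_1)^*\Xi(e_2\otimes f_2)
=\phi([e_1,e_2]_\cE)\otimes_{\mathrm{loc}}\psi([f_1,f_2]_\cF)
=(\phi\otimes\psi)\bigl([e_1,e_2]_\cE\otimes[f_1,f_2]_\cF\bigr),
\end{equation*}
and analogously $\Xi((e\otimes f)(a\otimes b))=\Xi(e\otimes f)\,(\phi\otimes\psi)(a\otimes b)$. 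Pulling back through $\phi\otimes\psi$ defines an $\cA\otimes_*\cB$-valued sesquilinear pairing $[x,y]:=(\phi\otimes\psi)^{-1}(\Xi(x)^*\Xi(y))$ on $\cE\otimes_{\mathrm{alg}}\cF$, extending \eqref{e:tpp} uniquely by sesquilinearity and intertwining the natural right $\cA\otimes_{\mathrm{alg}}\cB$-action with the $\cA\otimes_*\cB$-action.

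Axioms (g2), (g3), (h2), (h3) of the gramian and module structure follow by direct translation from the corresponding operator identities, and positivity $[x,x]\geq 0$ holds because $\Xi(x)^*\Xi(x)$ is locally positive by Remark~\ref{r:lsa} and $\phi\otimes\psi$, being a faithful $*$-morphism of locally $C^*$-algebras, reflects positivity. The main technical obstacle is non-degeneracy $[x,x]=0\Ra x=0$, equivalently injectivity of $\Xi$; the plan is to decompose $\Xi$ as the composition $\cE\otimes_{\mathrm{alg}}\cF\xrightarrow{\Phi\otimes_{\mathrm{alg}}\Psi}\Lloc(\cH_1,\cK_1)\otimes_{\mathrm{alg}}\Lloc(\cH_2,\cK_2)\ra\Lloc(\cH_1\otimes_{\mathrm{loc}}\cH_2,\cK_1\otimes_{\mathrm{loc}}\cK_2)$, where the first factor is injective since $\Phi,\Psi$ are injective $\CC$-linear and the algebraic tensor product of injective $\CC$-linear maps is injective, while the second is injective because at each level the Hilbert-space operator tensor product is faithful. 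Finally, to obtain the completion, invoke Example~\ref{ex:rhlm}(1): the ambient $\Lloc(\cH_1\otimes_{\mathrm{loc}}\cH_2,\cK_1\otimes_{\mathrm{loc}}\cK_2)$ is a complete Hilbert locally $C^*$-module over $\Lloc(\cH_1\otimes_{\mathrm{loc}}\cH_2)$, whose seminorms restrict on $\Xi(\cE\otimes_{\mathrm{alg}}\cF)$ to $\overline{r}(x)=r([x,x])^{1/2}$, $r\in S(\cA\otimes_*\cB)$. The closure of $\Xi(\cE\otimes_{\mathrm{alg}}\cF)$ inside this complete Hausdorff ambient is a closed right $(\phi\otimes\psi)(\cA\otimes_*\cB)$-submodule whose gramian still takes values in the closed locally $C^*$-subalgebra $(\phi\otimes\psi)(\cA\otimes_*\cB)$; transporting back via $(\phi\otimes\psi)^{-1}$ yields the sought Hilbert $\cA\otimes_*\cB$-module completion, with uniqueness of the extended gramian following from continuity and density.
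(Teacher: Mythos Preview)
Your proof is correct and follows essentially the same route as the paper: represent $\cA,\cB$ and $\cE,\cF$ concretely via Theorem~\ref{t:rephlcm}, embed $\cE\otimes_{\mathrm{alg}}\cF$ into $\Lloc(\cH_1\otimes_{\mathrm{loc}}\cH_2,\cK_1\otimes_{\mathrm{loc}}\cK_2)$ via the operator tensor product, and read off the gramian and the completion from the ambient Hilbert locally $C^*$-module. Your version is in fact more careful than the paper's on one point: you explicitly argue non-degeneracy of the gramian by factoring $\Xi$ as an algebraic tensor product of injective maps followed by the levelwise-faithful operator tensor, whereas the paper's proof simply asserts the natural inclusion and does not isolate the injectivity step.
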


\begin{proof} By Theorem~\ref{t:gelfandnaimark},
as in Subsection~\ref{ss:stplcsa}, without loss of generality we can assume that, for 
two locally Hilbert spaces $\cH=\varinjlim_{\lambda\in\Lambda}\cH_\alpha$ and 
$\cG=\varinjlim_{\alpha\in A}\cG_\alpha$, $\cA$ is a locally $C^*$-subalgebra 
of $\Lloc(\cH)$ and $\cB$ is
a locally $C^*$-subalgebra of $\Lloc(\cG)$. These yield a natural embedding of the spatial
tensor product of locally $C^*$-algebras $\cA\otimes_*\cB$ into 
$\Lloc(\cH\otimes_{\mathrm{loc}}\cG)$.

Then, by Theorem~\ref{t:rephlcm}, without loss of 
generality we can assume that $\cE$ is an $\cA$-submodule of $\Lloc(\cH,\cK)$, for 
$\cK=\varinjlim_{\lambda\in\Lambda}\cK_\lambda$ some locally Hilbert space, and $\cF$ is
an $\cB$-submodule of $\Lloc(\cG,\cN)$, for $\cN=\varinjlim_{\alpha\in A} \cN_\alpha$ some
locally Hilbert space.

We consider the locally Hilbert tensor products $\cH\otimes_{\mathrm{loc}}\cG$ and
$\cK\otimes_{\mathrm{loc}}\cN$, as in Subsection~\ref{ss:tplhs}, and observe that 
$\cE\otimes_{\mathrm{alg}}\cF$ is naturally included in $\Lloc(\cH\otimes_{\mathrm{loc}}\cG,
\cK\otimes_{\mathrm{loc}}\cN)$.
Then observe that $\{\cB(\cH_\lambda,\cK_\lambda)\otimes\cB(\cG_\alpha,\cN_\alpha)
\}_{(\lambda,\alpha)\in\Lambda\times A}$ is a projective system of Banach spaces whose 
projective limit
\begin{equation}
\Lloc(\cH,\cK)\otimes_{\mathrm{loc}}\Lloc(\cG,\cN)
=\varprojlim_{(\lambda,\alpha)\in\Lambda\times A} \cB(\cH_\lambda,\cK_\lambda)
\otimes\cB(\cG_\alpha,\cN_\alpha),
\end{equation} is naturally organised as a Hilbert module over the locally $C^*$-algebra 
$\Lloc(\cH)\otimes_{\mathrm{loc}}\Lloc(\cG)$. Consequently, we perform the extension of the pairing defined at 
\eqref{e:tpp} to
an $\cA\otimes_*\cB$-gramian on $\cE\otimes_{\mathrm{alg}}\cF$, with respect to which it
is a pre-Hilbert locally $C^*$-module,
and then we uniquely extend it
to the closure of $\cE\otimes_{\mathrm{alg}}\cF$ in $\Lloc(\cH,\cK)\otimes_{\mathrm{loc}}
\Lloc(\cG,\cN)$, as a Hilbert module over the
locally $C^*$-algebra $\cA\otimes_*\cB$.
\end{proof}

The tensor product $\cE\otimes_{\mathrm{ext}} \cF$, defined as the completion of $\cE\otimes_{\mathrm{alg}}\cE$ 
and organised by Theorem~\ref{t:etp} as a Hilbert module over the locally $C^*$-algebra 
$\cA\otimes_*\cB$,
is called the \emph{exterior tensor product} of the Hilbert locally $C^*$-modules $\cE$ and $\cF$,
and it coincides with that obtained in \cite{Joita4}, see also \cite{Lance}.

\end{document}